\newtheorem{thm}{Theorem}
\newtheorem{lem}[thm]{Lemma}
\newtheorem{cor}[thm]{Corollary}
\theoremstyle{definition}  
\newtheorem{definition}[thm]{Definition}
\newcommand{\Z}{\mathbb Z}
\newcommand{\N}{\mathbb N}
\begin{document}

\author{Robbert Fokkink and  Reem Yassawi}
\address{TU Delft, Netherlands 
and \\
Institut Camille Jordan, Universit\'{e} Lyon-1, France
}
\email{R.J.Fokkink@tudelft.nl\\yassawi@math.univ-lyon1.fr}

\keywords{Algebraic dynamics, endomorphisms and automorphisms of compact abelian $\Z^d$ actions, Automorphism groups.}
\subjclass{Primary 37B10,  Secondary 37B05, 37B15}

\title[Topological rigidity of linear cellular automaton shifts]
{Topological rigidity of linear cellular automaton shifts}

\begin{abstract}
We prove that topologically isomorphic linear cellular automaton
shifts are algebraically isomorphic. Using this, we show that two distinct such shifts cannot be isomorphic. We conclude  that the automorphism group of a linear cellular automaton shift is a finitely generated abelian group.
\end{abstract}

\maketitle

\section{Introduction}

A full shift consists of a space,  the set of doubly infinite sequences $\{1, 2, \ldots, q\}^{\mathbb Z}$, and the
 transformation $\sigma$ acting on points in that space, defined by $\sigma(x)_n=x_{n+1}$. A multi-dimensional shift is defined analagously, where the space
 $\{1,2,\ldots,q\}^{\mathbb Z^d}$ is acted on by $d$ shifts, defined by
\[
\sigma_i(x)_{(n_1,\ldots,n_i,\ldots,n_d)}=x_{(n_1,\ldots,n_i+1,\ldots,n_d)}
\]
for transformations $\sigma_1,\sigma_2,\ldots,\sigma_d$.
A \textit{subshift} is a closed,  shift-invariant subset of a full shift.
It is \textit{Markov} if it is a shift of finite type. We refer to \cite{Lind-Marcus-1995} for definitions and the basic topological set-up. 

In symbolic dynamics $\{1,\ldots,q\}$ is a finite set with no additional structure. In algebraic dynamics
$\{1,\ldots,q\}$ is a finite abelian group or a finite field.
In this paper, we limit ourselves to the simplest case, when $q$ is prime.
To emphasize this, we write $p$ from now on, instead of $q$, and we denote
the finite field by $\mathbb F_p$.
Thus $\{1,2,\ldots,p\}^{\mathbb Z^d}$ becomes a compact abelian group under coordinatewise addition~$\mathbb F_p^{\mathbb Z^d}$. A \textit{Markov subgroup} is a subshift that is also an additive subgroup of
$\mathbb F_p^{\mathbb Z^d}$. Any such group is a shift of finite type.
A very nice survey of Markov groups is given in~\cite{Kitchens-1997}. 
One motivation of our paper is to try and find topological analogues of the metric results in that survey.
 
The standard example of a Markov subgroup is the \textit{Ledrappier shift} \cite{Ledrappier-1979} 
defined by	
\[
\Lambda=\{(x)_{(m,n)}\colon x_{(m,n)}+x_{(m+1,n)}+x_{(m,n+1)}=0 \text{ for each }i,j\in\mathbb Z^2\}\subset \{0,1\}^{\mathbb Z^2}.
\]
The defining relation of $\Lambda$ corresponds to an $L$-shape in the lattice $\mathbb Z^2$:
\begin{center}
\includegraphics[width=1.5cm]{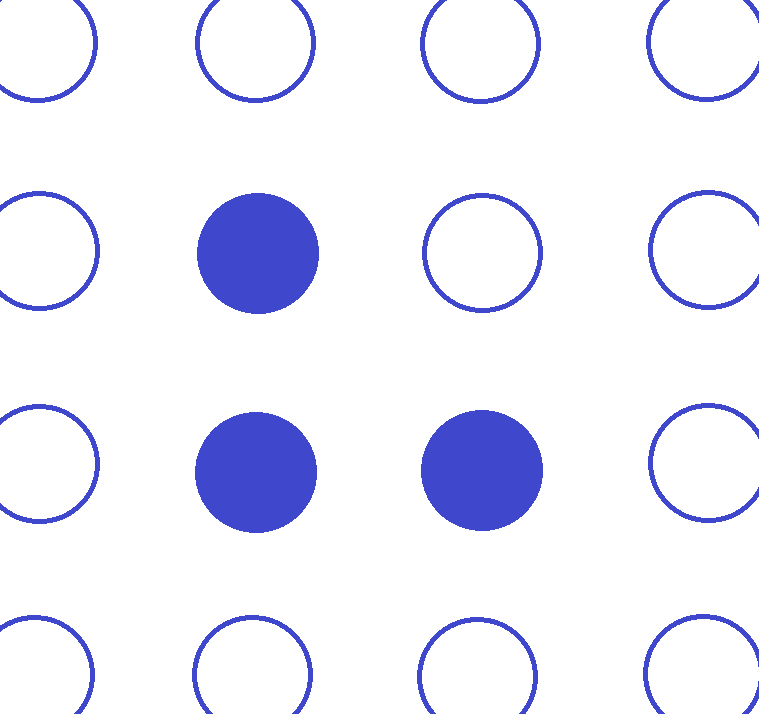}
\end{center}
It is convenient to identify $(c_{(i_1,\ldots,i_d)})\in\mathbb F_p^{\mathbb Z^d}$
with the  formal Laurent series
\[
\sum_{(i_1,\ldots,i_d)\in\mathbb Z^d} c_{i_1,\cdots,i_d}X_1^{i_1}\cdots X_d^{i_d}.
\]
The shift $\sigma_i$ is given by the multiplication by $X_i^{-1}$.
We denote the set of all Laurent series by
$\mathbb F_p[[X_1^{\pm 1},\ldots,X_d^{\pm 1}]]$.
A \textit{Laurent polynomial} is a Laurent series in which all but finitely
many coefficients are zero. We denote the
set of Laurent polynomials by $\mathbb F_p[X_1^{\pm 1},\ldots,X_d^{\pm 1}]$.
It is a unique factorization domain and the set of Laurent series
$\mathbb F_p[[X_1^{\pm 1},\ldots,X_d^{\pm 1}]]$ is a module over this domain.
A Markov subgroup is a subgroup of
$\mathbb F_p[[X_1^{\pm 1},\ldots,X_d^{\pm 1}]]$ which is invariant under
multiplication by $X_i$, i.e., it is a submodule. 
The annihilator of a Markov subgroup $M$
is the ideal of all polynomials $P$ such that $Px=0$ for each $x\in M$.
Annihilator are finitely generated
since $\mathbb F_p[[X_1^{\pm 1},\ldots,X_d^{\pm 1}]]$ is Noetherian.
We will be interested in particular in submodules~$P^{\perp}$ that have an annihilator that is
generated by a single polynomial~$P$. 
For example, with this notation, the Ledrappier shift is equal to $\left(1+X_1^{-1}+X_2^{-1}\right)^\perp$.

Two Markov shifts $M_1$ and $M_2$ are \textit{isomorphic} if there exists
an invertible map $\phi\colon M_1\to M_2$ which is shift commuting, i.e.,
$\phi\circ\sigma_i=\sigma_i\circ\phi$ for all $i$. If $\phi$ is a homeomorphism,
then it is a \textit{topological isomorphism}. If $\phi$ is measure
preserving, then it is a \textit{measurable isomorphism}. 
If $M_1$ and $M_2$ are Markov subgroups and $\phi$ is an isomorphism
between modules, then it is an \textit{algebraic isomorphism}.
An algebraic isomorphism is continuous and preserves  the Haar probability  measure, which 
is the only measure we consider.
 Kitchens conjectured that if $P^{\perp}$ and $Q^{\perp}$
are measurably isomorphic, then they are algebraically isomorphic~\cite{Kitchens-1997}.
This conjecture has been proved for irreducible and strongly mixing $P^{\perp}$ and $Q^{\perp}$:

\begin{thm}[Kitchens-Schmidt, \cite{Kitchens-Schmidt-2000}] Suppose that $P$ and $Q$ are  irreducible elements of $\mathbb F_p[X_1^{\pm 1},\ldots,X_d^{\pm 1}]$. 
If $P^{\perp}$ and $Q^{\perp}$ are measurably isomorphic and strongly mixing,
then they are algebraically isomorphic.
\end{thm}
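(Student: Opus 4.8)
The plan is to derive the statement from an \emph{isomorphism rigidity} principle: that every measurable isomorphism between these systems already coincides, up to a translation, with an algebraic one. Granting that, the algebraic conclusion is immediate, so the whole weight of the argument falls on the rigidity.

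I would first pass to the dual algebraic picture. Write $R=\F_p[X_1^{\pm 1},\ldots,X_d^{\pm 1}]$. By Pontryagin duality the system $P^\perp$ is dual to the cyclic $R$-module $R/(P)$, and since $P$ is irreducible the ideal $(P)$ is prime, so $R/(P)$ is an integral domain. Duality turns an algebraic isomorphism $P^\perp\to Q^\perp$ into an $R$-module isomorphism $R/(Q)\to R/(P)$, and such an isomorphism exists precisely when the two modules have the same annihilator, i.e.\ when $(P)=(Q)$, equivalently when $P$ and $Q$ are associates. Thus the real content of the theorem is that a measurable isomorphism between strongly mixing irreducible principal systems forces $P$ and $Q$ to differ only by a unit of $R$.

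The core step is to show that a given measurable isomorphism $\phi\colon P^\perp\to Q^\perp$ agrees almost everywhere with an affine map $x\mapsto\psi(x)+c$, where $c\in Q^\perp$ is fixed and $\psi$ is a continuous group isomorphism commuting with the shifts. I would encode $\phi$ by its graph joining $J$, a shift-invariant ergodic measure on $P^\perp\times Q^\perp$ whose marginals are the two Haar measures, and aim to prove that $J$ is \emph{algebraic}: the Haar measure of a translate of a closed shift-invariant subgroup $H\le P^\perp\times Q^\perp$. Irreducibility is what makes this feasible, since for irreducible algebraic actions every measurable factor is forced to be algebraic, and the algebraic factors of $P^\perp$ are classified by the ideals of $R$ containing $(P)$; the supply of invariant structures is therefore completely under control. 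Strong mixing enters through the entropy and conditional-measure bookkeeping for the diagonal $\Z^d$-action on the joining: restricting to suitable one-parameter subdynamics that are expansive, one computes fibre entropies over each coordinate, and comparing these along several independent directions rules out any measurable ``shearing'' of the graph and forces the conditional measures of $J$ along the relevant Lyapunov subgroups to be Haar. This identifies $J$ with Haar measure on an algebraic subgroup $H$ whose two projections are isomorphisms, so $H$ is the graph of a continuous $R$-module isomorphism $\psi$.

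Once $J$ is known to be algebraic one reads off $\phi=\psi+c$ almost everywhere; absorbing the translation, $\psi\colon P^\perp\to Q^\perp$ is the desired algebraic isomorphism, and therefore $(P)=(Q)$. I expect the joining-rigidity step to be the principal obstacle: a purely measurable conjugacy can a priori be extremely irregular, and confining $J$ to an algebraic measure needs the full force of the measure-rigidity machinery for algebraic $\Z^d$-actions. In positive characteristic this is run most cleanly through the adelic description of $R/(P)$, assembling the finitely many local completions of its fraction field at which the action is expansive, so that the entropy comparisons above actually have positive entropy to exploit; verifying that these local contributions combine to force algebraicity of $J$ is the delicate heart of the argument.
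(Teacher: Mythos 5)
This statement is not proved in the paper at all: it is quoted verbatim as a known theorem from \cite{Kitchens-Schmidt-2000} and used as background motivation, so there is no internal proof to compare your attempt against. Judged on its own terms, your proposal correctly identifies the architecture of the argument in the cited reference: reduce everything to isomorphism rigidity (every measurable conjugacy between such systems is affine), obtain that by showing the graph joining is an algebraic measure, and then conclude by noting that an $R$-module isomorphism $R/(Q)\to R/(P)$ forces $(P)=(Q)$ since annihilators of cyclic modules are preserved. That last reduction is sound and matches how the present paper itself argues in Theorem \ref{homomorphisms-are-algebraic} and Corollary \ref{homomorphisms-are-trivial}.

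However, as a proof your text has a genuine gap exactly where you say it does: the joining-rigidity step is asserted, not established, and it carries the entire weight of the theorem. Moreover, the machinery you gesture at --- entropy comparisons along Lyapunov subgroups and an adelic assembly of local completions --- is the toolkit of the Rudolph--Katok--Spatzier/Einsiedler--Lindenstrauss line of measure rigidity, which is not what Kitchens and Schmidt use here. Their groups $P^\perp\subset\F_p^{\Z^d}$ are totally disconnected, and their argument exploits that directly: irreducibility controls the closed invariant subgroups, and strong mixing is fed into a comparatively elementary Fourier/character-theoretic analysis of the conjugacy (the ``relatively straightforward algebraic characterization of $2$-mixing'' that the concluding remarks of this paper allude to) rather than into positive-entropy conditional-measure arguments. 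You should also record the standing hypotheses under which the rigidity theorem is actually proved ($d\ge 2$; for $d=1$ Ornstein theory destroys any such rigidity, and indeed irreducible one-variable $P$ give finite, non-mixing systems). So: right skeleton, wrong (or at least unverified) engine, and the delicate heart of the argument is left entirely to citation.
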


We are interested in the topological version of Kitchens's conjecture.
We were unable to settle this topological version in full generality,
and restrict our attention to polynomials of the following form:
if \[P=X_d-\Phi(X_1,\ldots,X_{d-1})\]
then we say that $P^{\perp}$ is a 
\textit{linear cellular automaton shift}. Points of a linear cellular automaton shift, that is specific two dimensional configurations in these shifts,   have been studied by  many, for example by Martin, Odlyzko and Wolfram in \cite{Martin-Odlyzko-Wolfram-1984}. 
To our knowledge it is Ledrappier who first studied  a particular such Markov group,  and Kitchens and Schmidt who first studied these shifts in full generality.
We require that $\Phi$ contains at least two non-zero terms, otherwise
the shift would be trivial.
For cellular automaton shifts it is customary to decompose the module
$\mathbb F_p[[X_1^{\pm 1},\ldots,X_d^{\pm 1}]]$ into layers which are indexed by powers
of $X_d$. The powers of $X_d$ form the \textit{time axis}.
One imagines layers to be changing over	time.
An individual time step from one layer to the next
corresponds to a multiplication by $\Phi(X_1,\ldots,X_{d-1})$.

In this article, we show in Theorem \ref{homomorphisms-are-algebraic} that topologically isomorphic linear cellular automaton shifts
are algebraically isomorphic. We apply this result to deduce, in Corollary \ref{homomorphisms-are-trivial}, that distinct linear cellular automaton shifts cannot be topological factors of one another. We also show in Corollary \ref{automorphism-group-is-finitely-generated} that the automorphism group of a linear cellular automaton shift is finitely generated and abelian.

\section{Mixing}\label{mixing}

A probability measure preserving system $(X,\mathcal B, \mu, T)$ is {\em $k+1$-mixing} if
for all measurable sets
$A_0,\ldots,A_k$ 
\[\mu(A_0\cap T^{-n_1}A_1\cap \cdots\cap T^{-n_k}A_k)\longrightarrow\mu(A_0)\cdots\mu(A_k)\] 
as $n_1, n_2-n_1,\ldots,n_k-n_{k-1}\longrightarrow \infty$. 
The term strongly mixing is used to refer to $2$-mixing.
It has been an open problem
for some time, due to Rokhlin~\cite{Rokhlin-1949}, whether strong mixing implies mixing of all orders. 

Let $G$ be a countable abelian group. A $G$-\textit{system} $(X,\mathcal B, \mu, (T_g)_{g\in G})$
is a probability space with a measure preserving $G$-action. It is $k+1$-mixing if
for all measurable sets
$A_0,\ldots,A_k$ 
\[\mu(A_0\cap T_{-g_1} A_1\cap \cdots\cap T_{-g_k} A_k)\longrightarrow\mu(A_0)\cdots\mu(A_k)\] 
as $g_i\longrightarrow\infty$ and $g_j-g_i\longrightarrow\infty$ for all $i\not=j$. 
We often abbreviate the notation and denote a $G$-system simply by $X$. Let $\mu$ be the Haar measure on $P^\perp$ and let $\mathcal B$ be the Borel $\sigma$-algebra.
Ledrappier's shift is a $\mathbb Z^2$-system which is
$2$-mixing but not $3$-mixing.
Another way to define $k+1$-mixing would be that 
\[\mu(T_{-g_0} A_0\cap T_{-g_1} A_1\cap \cdots\cap T_{-g_k} A_k)\longrightarrow\mu(A_0)\cdots\mu(A_k)\] 
as $g_j-g_i\longrightarrow\infty$ for all $i\not=j$. The two definitions are equivalent
because $T_{g_0}$ is a measure preserving automorphism.

In our case the probability space is a Markov shift 
$P^{\perp}\subset \mathbb F_p[[X_1^{\pm 1},\ldots,X_d^{\pm 1}]]$
endowed with the Haar measure and $G$ is equal to $\mathbb Z^d$.
For $\mathbf n\in\Z^d$ the transformation $T_\mathbf n$ is
the group automorphism
\[
\sum c_{i_1,\cdots,i_d}X_1^{i_1}\cdots X_d^{i_d}\ \longrightarrow 
\sum c_{i_1,\cdots,i_d}X^{i_1+n_1}\cdots X^{i_d+n_d}
\]
if $\mathbf n=(n_1,\ldots,n_d)$. In other words, $T_\mathbf n$ is the multiplication by the monomial
with exponent~$\mathbf n$, which we denote by
$
X^\mathbf n=X_1^{n_1}\cdots X_d^{n_d}.
$

It is convenient to describe Markov subgroups as modules, but they are also compact abelian groups.
We recall Halmos's classical result \cite{Halmos-1943},  \cite[Theorems 1.10 and 1.28]{Walters-1982}, which is that 
a continuous automorphism $T$ of a compact abelian group $\Gamma$ is $2$-mixing if and only if the induced automorphism on the character group $\widehat{\Gamma}$ has no finite orbits.
Kitchens and Schmidt finesse this characterisation of 2-mixing for algebraic shifts.
 
\begin{lem}\cite[Proposition 2.11]{Kitchens-Schmidt-1992}\label{2-mixing-in-one-direction}
Let $P=P_1 \ldots P_\ell$ for irreducible Laurent polynomials~$P_i$. 
Then $(P^{\perp}, \mathcal B,\mu, T_{\mathbf n})$ is not 2-mixing if and only if 
one of the factors $P_i$ is a polynomial in $X^\mathbf m$
for some $\mathbf m\in\Z^d$.
\end{lem}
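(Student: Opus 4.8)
The plan is to pass to the Pontryagin dual and invoke the Halmos criterion recalled just above. Since $P^\perp$ is a compact abelian group and $T_\mathbf n$ is multiplication by the monomial $X^\mathbf n$, Halmos's theorem says that $(P^\perp,\mathcal B,\mu,T_\mathbf n)$ fails to be $2$-mixing precisely when the dual automorphism $\widehat{T_\mathbf n}$ on the character group $\widehat{P^\perp}$ has a nontrivial finite orbit. The first step is therefore to compute this dual. The dual of $\mathbb F_p[[X_1^{\pm1},\ldots,X_d^{\pm1}]]$ is the Laurent polynomial ring itself, and under this duality a closed submodule $M$ has character group $\widehat M\cong \mathbb F_p[X_1^{\pm1},\ldots,X_d^{\pm1}]/\mathrm{ann}(M)$, with the module structures matched. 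Since $\mathrm{ann}(P^\perp)=(P)$, we get $\widehat{P^\perp}\cong \mathbb F_p[X_1^{\pm1},\ldots,X_d^{\pm1}]/(P)$, and $\widehat{T_\mathbf n}$ becomes multiplication by $X^{\mathbf n}$ (up to replacing $\mathbf n$ by $-\mathbf n$, which is irrelevant for orbit finiteness).

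Second, I would translate the finite-orbit condition into divisibility. A class $\bar g\neq 0$ in $\mathbb F_p[X_1^{\pm1},\ldots,X_d^{\pm1}]/(P)$ has finite $\widehat{T_\mathbf n}$-orbit iff $X^{k\mathbf n}\bar g=\bar g$ for some $k\geq 1$, that is, $(X^{k\mathbf n}-1)g\in(P)$ while $g\notin(P)$. Using that the Laurent ring is a UFD, such a $g$ exists for some $k$ iff $\gcd(P,X^{k\mathbf n}-1)\neq 1$ for some $k$: if the gcd is a unit then coprimality forces $P\mid g$, a contradiction, whereas if $d:=\gcd(P,X^{k\mathbf n}-1)$ is a non-unit then $g:=P/d$ works, since $(X^{k\mathbf n}-1)(P/d)=P\cdot(X^{k\mathbf n}-1)/d\in(P)$. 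Factoring $P=P_1\cdots P_\ell$, the gcd is a non-unit iff some irreducible factor $P_i$ divides $X^{k\mathbf n}-1$. Hence, assuming $\mathbf n\neq 0$, failure of $2$-mixing is equivalent to $P_i\mid X^{k\mathbf n}-1$ for some index $i$ and some $k\geq 1$.

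Third comes the heart of the matter: identifying ``$P_i\mid X^{k\mathbf n}-1$ for some $k$'' with ``$P_i$ is a polynomial in a single monomial $X^\mathbf m$''. For the forward implication, write $\mathbf n=e\,\mathbf n_0$ with $\mathbf n_0$ primitive, and choose a unimodular change of coordinates on $\mathbb Z^d$ sending $\mathbf n_0$ to the first basis vector, so that $X^{\mathbf n_0}$ becomes a single new variable $Y_1$ and $X^{k\mathbf n}-1=Y_1^{ke}-1$ is univariate; any irreducible factor of a univariate Laurent polynomial is again univariate, so $P_i$ is a polynomial in $X^{\mathbf n_0}$, giving $\mathbf m=\mathbf n_0$. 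For the converse, if $P_i=f(X^\mathbf m)$ for a one-variable $f$, then every nonzero root of $f$ lies in $\overline{\mathbb F_p}$ and is therefore a root of unity; writing $f$ as a monomial times an ordinary irreducible polynomial, which is separable because $\mathbb F_p$ is perfect, one gets $f(Y)\mid Y^N-1$ for a suitable $N$ coprime to $p$, whence $P_i\mid X^{N\mathbf m}-1$.

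I expect the main obstacle to be the bookkeeping in this third step, specifically the unimodular reduction of $\mathbf n$ to a basis direction together with the characteristic-$p$ subtleties: $X^{k\mathbf n}-1$ is highly inseparable, so one must split off the inseparable part and argue with roots of unity of order prime to $p$. One should also be attentive to the quantifier on $\mathbf m$. For a fixed $\mathbf n$ the relevant direction $\mathbf m$ is forced to be the primitive vector $\mathbf n_0$ parallel to $\mathbf n$, so the clean existential ``$P_i$ is a polynomial in $X^\mathbf m$ for some $\mathbf m$'' is read with $\mathbf m$ taken parallel to the offending $\mathbf n$.
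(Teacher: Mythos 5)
The paper does not prove this lemma at all: it is quoted verbatim from Kitchens--Schmidt \cite{Kitchens-Schmidt-1992}, so there is no in-paper argument to compare against. Your reconstruction is correct and is the standard (and surely the original) route: dualize to $\mathbb F_p[X_1^{\pm1},\ldots,X_d^{\pm1}]/\langle P\rangle$, apply the Halmos criterion to multiplication by $X^{\mathbf n}$, convert the existence of a nontrivial finite orbit into the condition $\gcd(P, X^{k\mathbf n}-1)$ being a non-unit via the UFD property, and then identify the irreducible factors of $X^{k\mathbf n}-1$ with univariate polynomials in $X^{\mathbf n_0}$ after a unimodular change of basis, with the converse handled through roots of unity of order prime to $p$. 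Two small remarks, neither a gap: first, you are right to flag the quantifiers --- as transcribed, the statement is literally false for a fixed $\mathbf n$ (e.g.\ $P=1+X_1+X_1^2$ in two variables is a polynomial in $X^{(1,0)}$ yet $T_{(0,1)}$ is $2$-mixing), and your reading with $\mathbf m$ parallel to $\mathbf n$ is the intended one, consistent with how the paper uses the lemma (only to rule out non-mixing directions for the irreducible $X_d-\Phi$ with $\Phi$ having at least two terms); second, in the forward direction the conclusion that $P_i$ is a polynomial in $X^{\mathbf n_0}$ holds only up to multiplication by a unit $cX^{\mathbf v}$, which is harmless since the factors $P_i$ are themselves only determined up to units.
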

 
\begin{thm}\cite[Theorem 2.4]{Kitchens-Schmidt-1989}\label{2-mixing}
If the algebraic shift  $(P^{\perp}, \mathcal B, \mu, T_\mathbf n)$ is $2$-mixing for every $\mathbf n\in\mathbb Z^d$, 
then the $\mathbb Z^d$-system $(P^{\perp}, \mathcal B, \mu,  \mathbb Z^d)$ is 2-mixing.
 \end{thm}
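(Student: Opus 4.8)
The plan is to dualise everything via Pontryagin duality and then read the conclusion off Halmos's criterion quoted above. Write $R=\F_p[X_1^{\pm1},\ldots,X_d^{\pm1}]$. Since $P^\perp$ is the submodule annihilated by the ideal $(P)$, its character group is the discrete $R$-module $\widehat{P^\perp}\cong R/(P)$, and under this identification the dual of $T_\mathbf n$ (multiplication by $X^\mathbf n$ on $P^\perp$) is again multiplication by $X^\mathbf n$ on $R/(P)$. The characters then form an orthonormal basis of $L^2(P^\perp,\mu)$ indexed by the elements of $R/(P)$, permuted by $T_\mathbf n$ via $\chi\mapsto X^\mathbf n\chi$. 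My first step is to set up this dictionary carefully, since everything afterwards is a purely algebraic statement about $R/(P)$.

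Second, I would rewrite both mixing conditions in this language. For a single direction, Halmos's criterion says $T_\mathbf n$ is $2$-mixing if and only if $\chi\mapsto X^\mathbf n\chi$ has no finite orbit on the nonzero elements of $R/(P)$, which is equivalent to the statement that no nonzero $\chi\in R/(P)$ satisfies $X^{k\mathbf n}\chi=\chi$ for some integer $k\ge1$. For the full action I would use the orthonormal basis: approximating indicators in $L^2$ by finite sums of characters reduces $2$-mixing of the $\Z^d$-system to showing that for every pair of nonzero characters $\chi,\psi$ the set $\{\mathbf g\in\Z^d:X^\mathbf g\chi=\psi\}$ is finite. Fixing one solution $\mathbf g_0$ exhibits this set as the coset $\mathbf g_0+\mathrm{Stab}(\chi)$, where $\mathrm{Stab}(\chi)=\{\mathbf h:X^\mathbf h\chi=\chi\}$ is a subgroup of $\Z^d$. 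Because $\Z^d$ is torsion free, such a coset is finite (equivalently bounded, so that the correlation tends to $0$ as $\|\mathbf g\|\to\infty$ in every direction) precisely when $\mathrm{Stab}(\chi)$ is trivial. Hence the $\Z^d$-system is $2$-mixing if and only if no nonzero $\chi\in R/(P)$ is fixed by any $X^\mathbf h$ with $\mathbf h\ne 0$.

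The last step matches the two conditions. The hypothesis, via the first reduction applied to every direction, asserts that no nonzero $\chi$ satisfies $X^{k\mathbf n}\chi=\chi$ for any $\mathbf n\ne 0$ and $k\ge1$; as $\mathbf n$ ranges over $\Z^d\setminus\{0\}$ and $k\ge1$, the vectors $k\mathbf n$ exhaust $\Z^d\setminus\{0\}$ (already $k=1$ suffices), so this is verbatim the condition that no nonzero $\chi$ has a nontrivial stabilizer, which is exactly the $2$-mixing criterion for the $\Z^d$-action obtained in the previous step. I expect the genuinely delicate part to be the duality bookkeeping rather than any estimate: verifying that $\widehat{P^\perp}\cong R/(P)$ with $T_\mathbf n$ acting by $X^\mathbf n$, and checking that the $L^2$-approximation legitimately reduces mixing along \emph{arbitrary} sequences $\mathbf g\to\infty$ to boundedness of the coset $\mathbf g_0+\mathrm{Stab}(\chi)$ — this is the point where one must confirm that controlling each $\Z$-subaction suffices to control limits that need not follow any single line. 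Once the dictionary is in place the result follows, since every nonzero lattice vector is already the first multiple of itself.
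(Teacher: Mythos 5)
The paper does not prove this statement---it is quoted verbatim from Kitchens--Schmidt \cite[Theorem 2.4]{Kitchens-Schmidt-1989}---so there is no internal proof to compare against. Your argument is the standard duality proof and is correct: the reduction of $2$-mixing of the $\Z^d$-action to finiteness of $\{\mathbf g:X^{\mathbf g}\chi=\psi\}$ for nontrivial characters is legitimate (approximate the mean-zero parts of the indicators by finite sums of nontrivial characters; the finitely many character correlations each vanish outside a finite set of $\mathbf g$, so the error is controlled uniformly along arbitrary sequences $\mathbf g\to\infty$), and the observation that such a set is a coset of $\mathrm{Stab}(\chi)$, hence finite iff the stabilizer is trivial because $\Z^d$ is torsion-free, correctly matches the per-direction Halmos condition (with $k=1$) to the global one. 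The only cosmetic point is that the hypothesis should be read as ranging over $\mathbf n\neq 0$, since $T_{\mathbf 0}$ is the identity.
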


Lemma \ref{2-mixing-in-one-direction} and Theorem \ref{2-mixing} combined with the following lemma
imply that a cellular automaton shift $P^{\perp}$ is $2$-mixing. 

\begin{lem}
A Laurent polynomial $X_d-\Phi(X_1,\ldots,X_{d-1})\in\mathbb F_p[X_1^{\pm 1},\ldots,X_d^{\pm 1}]$ 
is irreducible.
\end{lem}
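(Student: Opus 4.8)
The plan is to exploit that $X_d-\Phi$ has degree one in the distinguished variable $X_d$, while staying alert to the fact that in a Laurent polynomial ring the units are not merely the nonzero scalars but all monomials $c\,X_1^{a_1}\cdots X_d^{a_d}$ with $c\in\F_p^\ast$. I would set $R=\F_p[X_1^{\pm 1},\ldots,X_{d-1}^{\pm 1}]$ and $S=\F_p[X_1^{\pm 1},\ldots,X_d^{\pm 1}]=R[X_d^{\pm 1}]$, and regard $P=X_d-\Phi$ as an element of $R[X_d^{\pm 1}]$ with $\Phi\in R$ and $\Phi\neq 0$ (which is guaranteed, since $\Phi$ is assumed to have at least two nonzero terms). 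The ring $R$ is an integral domain, and that is the only feature of it I will use.

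First I would introduce, for a nonzero $g=\sum_i g_i X_d^i\in R[X_d^{\pm 1}]$ with $g_i\in R$, the two quantities $\deg_{X_d} g=\max\{i: g_i\neq 0\}$ and $\mathrm{ord}_{X_d} g=\min\{i: g_i\neq 0\}$, together with the span $s(g)=\deg_{X_d} g-\mathrm{ord}_{X_d} g\geq 0$. Because $R$ is a domain, the top and bottom $X_d$-coefficients of a product multiply to something nonzero, so both $\deg_{X_d}$ and $\mathrm{ord}_{X_d}$ are additive on products; hence $s(AB)=s(A)+s(B)$. Note also that $s(g)=0$ holds precisely when $g=a\,X_d^k$ is a single power of $X_d$ with coefficient $a\in R\setminus\{0\}$.

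Next I would take an arbitrary factorization $P=AB$ in $S$. Since $P=1\cdot X_d-\Phi$ has $\deg_{X_d} P=1$ and $\mathrm{ord}_{X_d} P=0$, we get $s(P)=1$, so $s(A)+s(B)=1$ and, after relabelling, $s(A)=0$. By the previous step $A=a\,X_d^k$ for some $a\in R\setminus\{0\}$ and $k\in\Z$. Comparing the leading $X_d$-coefficients in $P=a\,X_d^k B$ shows that $a$ times the leading coefficient of $B$ equals the leading coefficient of $P$, which is $1$; hence $a$ divides $1$, so $a$ is a unit of $R$, that is, a monomial in $X_1,\ldots,X_{d-1}$ with nonzero scalar. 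Therefore $A=a\,X_d^k$ is itself a monomial, which is a unit of $S$. Thus every factorization of $P$ has a unit factor, which is exactly irreducibility.

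The only real subtlety --- and the reason a naive ``degree-one polynomials are irreducible'' argument does not apply directly --- is the unit group of the Laurent ring: one must rule out factors such as $c\,X^{\mathbf a}$ that are invisible to the ordinary $X_d$-degree. The span function $s$ is tailored to handle exactly this, being insensitive to multiplication by monomials. (Equivalently, one could clear denominators by a monomial $m\in R$ to replace $P$ by the genuine polynomial $mX_d-m\Phi\in\F_p[X_1,\ldots,X_d]$, verify via Gauss's Lemma that this degree-one polynomial is primitive and hence irreducible, and then invoke the correspondence between irreducibles of the localization $S$ and irreducible polynomials divisible by no $X_i$; but the span argument avoids this bookkeeping.) No hypothesis on $\Phi$ beyond $\Phi\neq 0$ is needed.
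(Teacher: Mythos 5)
Your argument is correct, and it reaches the same structural insight as the paper --- that $P$ is monic of degree one in the distinguished variable $X_d$ over the domain $R=\F_p[X_1^{\pm1},\dots,X_{d-1}^{\pm1}]$, so the only danger comes from the extra units created by inverting $X_d$ --- but you dispose of that danger differently. The paper first observes that $X_d+r$ is irreducible in the ordinary polynomial ring $R[X_d]$ (degree one, monic) and then asserts that irreducibility survives localization at the multiplicative set $\{X_d^n\}$; that transfer step is stated without justification and really leans on $R[X_d]$ being a UFD (or on $X_d+r$ being prime and not dividing any power of $X_d$). You instead stay inside the Laurent ring and introduce the span $s(g)=\deg_{X_d}g-\mathrm{ord}_{X_d}g$, which is additive on products because $R$ is a domain and which vanishes exactly on the elements $aX_d^k$; since $s(P)=1$ (here you correctly isolate $\Phi\neq0$ as the only hypothesis needed), one factor must be $aX_d^k$, and comparing top coefficients against the monic leading term forces $a$ to be a unit of $R$, hence that factor to be a unit of the Laurent ring. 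This is more self-contained and makes the role of the monomial units completely explicit, at the cost of a small amount of bespoke bookkeeping; the paper's route is shorter on the page but hides the localization lemma. The only cosmetic point: irreducibility also requires that $P$ itself not be a unit, which in your framework is immediate since units have span $0$ while $s(P)=1$ --- worth one clause to say so.
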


\begin{proof}
We denote the ring of Laurent polynomials in $d-1$ variables
by $\mathfrak R_{d-1}$.
The polynomial $X_d+r$ has degree one and therefore is irreducible
in $\mathfrak R_{d-1}[X_d]$ for any $r\in \mathfrak R_{d-1}$. 
It remains irreducible in the localization of $\mathfrak R_{d-1}[X_d]$ by 
the multiplicative set $S=\{X_d^n\colon n\in\mathbb N\}$.
The ring of fractions $S^{-1}\mathfrak R_{d-1}$ is equal to~$\mathfrak R_d$.
Therefore $X_d+r$ is irreducible in $\mathfrak R_d$.
\end{proof}

If a monomial $X^\mathbf n$ has a non-zero coefficient in $P$, then we
say that it \emph{occurs} in~$P$.
We say that the set 
\[
S(P)=\{\mathbf n\colon X^\mathbf n \text{ occurs in }P\}\]
is the \textit{shape} of the polynomial. 
For instance, the shape of the Ledrappier polynomial is equal to
$\{(0,0), (-1,0), (0,-1)\}$.

We say that a finite subset $S=\{\mathbf n_0,\ldots,\mathbf n_k\}\subset \mathbb Z^d$ is \textit{mixing} (for the particular action we are considering) if for all measurable sets $A_0,\ldots,A_k$
\[\mu(T_{-m\mathbf n_0}A_0\cap T_{-m\mathbf n_1} A_1\cap \cdots\cap T_{-m\mathbf n_k} A_k)\longrightarrow\mu(A_0)\cdots\mu(A_k)\] 
as $m\longrightarrow\infty$.
Note that $S$ is mixing if and only if any of its translates $\mathbf n+S$ is mixing,
since $T_\mathbf n$ is a measure preserving automorphism.
Therefore, we may translate $S$ so that it contains~$0$. 
We say that $S$ is \textit{primitive} if $0\in S$. Similarly,  
we say that $P$ is \textit{primitive} if $0$ occurs in $P$.

If a $\mathbb Z^d$-system is $k+1$-mixing, then all primitive sets
of cardinality $k+1$ are mixing. The converse is also true but this is not obvious. 
In fact, this remained an open problem for quite some time, until it was solved by Masser~\cite{Masser-2004}.
Ledrappier's shift is $2$-mixing but not mixing on $S=\{(0,0),(-1,0),(0,-1)\}$, as follows
from the following lemma.

\begin{lem}
$S(P)$ is a non-mixing set of $P^{\perp}$. 
More generally, $S(Q)$ is non-mixing
for any $Q\in \langle P\rangle$.
\end{lem}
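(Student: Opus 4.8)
The plan is to exploit the Frobenius endomorphism in characteristic $p$. Writing $P=\sum_{j=0}^{\ell}c_jX^{\mathbf{n}_j}$ with every $c_j\in\F_p^{\times}$, the freshman's dream gives $P^{p^k}=\sum_{j=0}^{\ell}c_j^{p^k}X^{p^k\mathbf{n}_j}=\sum_{j=0}^{\ell}c_jX^{p^k\mathbf{n}_j}$, where the last equality uses $c_j^{p^k}=c_j$ for elements of $\F_p$. The crucial point is that $P^{p^k}$ is a multiple of $P$, so it annihilates $P^{\perp}$: for every $x\in P^{\perp}$ we have $P^{p^k}x=P^{p^k-1}(Px)=0$. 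Thus the dilated shape $p^kS(P)$ supports, with the very same coefficients $c_j$, a linear relation valid on all of $P^{\perp}$.

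Next I would read off this relation coordinatewise. Since multiplication by $X^{\mathbf m}$ sends the coefficient of $X^{\mathbf 0}$ in $x$ to $x_{-\mathbf m}$, applying the projection $\pi_{\mathbf 0}\colon x\mapsto x_{\mathbf 0}$ to the identity $P^{p^k}x=0$ yields $\sum_{j=0}^{\ell}c_j\,x_{-p^k\mathbf{n}_j}=0$ for every $x\in P^{\perp}$ and every $k\ge 0$. Now choose constants $a_0,\dots,a_\ell\in\F_p$ with $\sum_{j}c_ja_j\neq 0$ (possible because the $c_j$ are nonzero), and set $A_j=\pi_{\mathbf 0}^{-1}(a_j)$. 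A point lies in $\bigcap_j T_{-p^k\mathbf{n}_j}A_j$ exactly when $x_{-p^k\mathbf{n}_j}=a_j$ for all $j$, which is incompatible with the displayed relation; hence this intersection is empty, and in particular has measure $0$, for every $k$. Taking $m=p^k\to\infty$ along this subsequence then shows that the mixing limit fails, provided $\prod_j\mu(A_j)>0$.

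The one point that needs care is positivity of $\mu(A_j)$, and this is the step I would treat as the main (if modest) obstacle. The projection $\pi_{\mathbf 0}$ is a continuous group homomorphism from $P^{\perp}$ onto a subgroup of $\F_p$; were it the zero map, shift-invariance would force $P^{\perp}=0$, which is false, so $\pi_{\mathbf 0}$ is surjective and pushes the Haar measure forward to the uniform measure on $\F_p$, giving $\mu(A_j)=1/p$ and $\prod_j\mu(A_j)=p^{-(\ell+1)}>0$. Finally, the generalization to $Q\in\langle P\rangle$ is immediate: writing $Q=RP$ we still have $Qx=R(Px)=0$, the Frobenius computation gives $Q^{p^k}=\sum_{\mathbf n\in S(Q)}c_{\mathbf n}X^{p^k\mathbf n}$, and $Q^{p^k}$ annihilates $P^{\perp}$ just as before, so the same cylinder-set argument applied to $S(Q)$ shows it is non-mixing. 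One checks en passant that $Q=RP$ has at least two terms whenever $P$ does, by comparing leading and trailing monomials in a monomial order, so that the statement is not vacuous.
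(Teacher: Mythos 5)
Your argument is correct and follows essentially the same route as the paper's proof: use the Frobenius identity $P^{p^k}=\sum_j c_jX^{p^k\mathbf n_j}$ to get a linear relation supported on the dilated shape $p^kS(P)$, and conclude that a suitable intersection of cylinder sets is empty along the subsequence $m=p^k$, so the mixing limit fails. Your version adds two small refinements the paper leaves implicit --- the explicit check that the cylinder sets have positive Haar measure, and that $Q=RP$ still has at least two terms --- but the underlying idea is identical.
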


\begin{proof}
Without loss of generality we may assume that $P$ is primitive.
Consider the cylinder set 
$A$ of all Laurent series $\sum_{\mathbf n} c_{\mathbf n}X^{\mathbf n}$
which have constant coefficient $c_{\mathbf 0}=1$.
Similarly, let $B$ be the cylinder set of Laurent series 
in $P^{\perp}$ such that $c_{\mathbf 0}=0$.
Let $\{\mathbf 0, \mathbf n_1,\ldots,\mathbf n_k\}$ be the shape of $P$.
The elements of
\[
A\cap T_{-\mathbf n_1} B\cap \cdots\cap T_{-\mathbf n_k} B
\]
are exactly those Laurent series $L$ which have constant coefficient $1$ and
coefficient zero at all other monomials $X^{-\mathbf n}$ which occur in $P$. 
Therefore $P\cdot L$ has constant coefficient $1$.
In particular, $L$ is not annihilated by $P$. 

Observe that the shape of $P^p$ is equal to $pS(P)$ and more generally
$S(P^{p^n})=p^nS(P)$.
By the same argument as above, the elements of
\[
A\cap T_{-p^n\mathbf n_1} B\cap \cdots\cap T_{-p^n\mathbf n_k} B
\]
are not annihilated by $P^{p^n}$ and therefore this intersection is empty in $P^{\perp}$.
We conclude that
\[\mu(T_{-m\mathbf n_0}A \cap T_{-m\mathbf n_1} B\cap \cdots\cap T_{-m\mathbf n_k} B)=0\] 
if $m=p^n$. It follows that $S(P)$ is non-mixing.
We have only used that $P$ annihilates $P^\perp$ in this argument. 
The same proof applies to any $Q\in\langle P\rangle$
since $\langle P\rangle$ is the annihilator of~$P^\perp$.
\end{proof}

The following deep result of Masser can be found in~\cite[Cor 3.9]{Kitchens-Schmidt-1993}.

\begin{thm}\label{masser}
Let $P$ be an irreducible Laurent polynomial. 
A primitive set $E$ is non-mixing for
$P^{\perp}$ if and only if there exist $a,b,\ell \in\mathbb N$ and a 
Laurent polynomial $Q\in \mathbb F_{p^\ell}[X_1^{\pm 1},\ldots,X_d^{\pm 1}]$
with $S(Q)\subset E$, such that $P^{(a)}$ and $Q^{(b)}$ have a common factor.
\end{thm}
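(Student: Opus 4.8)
The plan is to dualise: I would reformulate non-mixing of the set $E$ as the existence of infinitely many solutions of a single linear equation over a function field, and then let the entire weight of the theorem fall on Masser's analysis of such equations in positive characteristic. By Pontryagin duality the character group of $P^{\perp}$ is the quotient $\mathfrak R_d/\langle P\rangle$, and the dual of $T_{\mathbf n}$ is multiplication by $X^{\mathbf n}$. Writing $E=\{\mathbf 0,\mathbf n_1,\ldots,\mathbf n_k\}$, the first step is the standard translation between mixing and relations among characters (Halmos's criterion and its refinement by Kitchens and Schmidt): $E$ is non-mixing for $P^{\perp}$ if and only if there are $g_0,\ldots,g_k\in\mathfrak R_d/\langle P\rangle$, not all zero, together with an infinite set of exponents $m$ for which
\[
g_0 + g_1 X^{m\mathbf n_1} + \cdots + g_k X^{m\mathbf n_k} = 0 \quad\text{in } \mathfrak R_d/\langle P\rangle.
\]
The failure of the product formula along a sequence $m=m_j$ is precisely the persistence of such a nontrivial relation.

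Since $P$ is irreducible, $\mathfrak R_d/\langle P\rangle$ is an integral domain, and I would pass to its fraction field $K$, a finitely generated field over $\mathbb F_p$. The images $u_i=X^{\mathbf n_i}$ lie in the finitely generated multiplicative group $\Gamma\subset K^{\times}$ generated by $X_1,\ldots,X_d$, so the relation becomes the linear equation
\[
g_0 + g_1 u_1^{\,m} + \cdots + g_k u_k^{\,m} = 0
\]
to be solved for $m\in\N$. Thus $E$ is non-mixing exactly when this equation has infinitely many solutions $m$.

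Deciding when such an equation has infinitely many solutions is the heart of the matter and the step I expect to be the real obstacle. In characteristic zero the Skolem--Mahler--Lech theorem forces the solution set to be a finite union of arithmetic progressions, so infinitely many solutions would already entail a degenerate subsum vanishing identically; in positive characteristic the Frobenius manufactures genuinely new infinite solution sets, and taming them is exactly Masser's theorem on linear equations over groups in positive characteristic. His structure theorem shows that an infinite family of solutions forces multiplicative dependencies among the $u_i$ that only become visible after passing to $p$-power exponents and extending scalars to some $\mathbb F_{p^\ell}$. Unwinding these dependencies, I would read off a Laurent polynomial $Q\in\mathbb F_{p^\ell}[X_1^{\pm 1},\ldots,X_d^{\pm 1}]$ supported on $E$, so that $S(Q)\subset E$, together with integers $a,b$ for which the dilations $P^{(a)}$ and $Q^{(b)}$, obtained by the substitution $X_i\mapsto X_i^{a}$ (so that $S(P^{(a)})=aS(P)$, reducing for $a=p^n$ to the identity $S(P^{p^n})=p^{n}S(P)$ noted above), acquire a common factor.

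The converse is the elementary direction and generalises the Frobenius computation already used to prove that $S(Q)$ is non-mixing for $Q\in\langle P\rangle$. Given a common factor of $P^{(a)}$ and $Q^{(b)}$, I would manufacture, along the sequence $m=a\,p^{n}$, an explicit nontrivial relation of the displayed form, thereby exhibiting infinitely many solutions and witnessing that $E$ is non-mixing.
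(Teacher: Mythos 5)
The paper does not prove this statement at all: it is imported verbatim as an external result, attributed to \cite[Cor.~3.9]{Kitchens-Schmidt-1993} together with Masser's completion of that work in \cite{Masser-2004}. So there is no internal proof to compare against, and your proposal should be judged as an attempted reconstruction of the cited argument. On that score your roadmap is the correct one and matches the literature exactly: dualise to $\mathfrak R_d/\langle P\rangle$, use irreducibility of $P$ to pass to the fraction field $K$, translate non-mixing of $E=\{\mathbf 0,\mathbf n_1,\ldots,\mathbf n_k\}$ into the existence of a nontrivial relation $g_0+g_1u_1^m+\cdots+g_ku_k^m=0$ with $u_i=X^{\mathbf n_i}$ holding for infinitely many $m$ (this is the Kitchens--Schmidt reduction), and then appeal to Masser's theorem on linear equations over finitely generated multiplicative groups in characteristic $p$. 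Your reading of $P^{(a)}$ as the dilation $X_i\mapsto X_i^a$ is also the intended one, consistent with the paper's later use of the fact that $P^{(a)}$ has unit coefficient at $X_d^a$.

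The gap is at the decisive step. The sentence beginning ``Unwinding these dependencies, I would read off a Laurent polynomial $Q$\dots'' is not an argument; it is a restatement of the conclusion. Masser's structure theorem delivers information about the solution set of the $S$-unit-type equation (roughly, that an infinite solution set is accounted for by $p$-power-translated subgroups after a finite extension of scalars), and converting that into the specific algebraic certificate --- a $Q$ over $\mathbb F_{p^\ell}$ supported on $E$ such that $P^{(a)}$ and $Q^{(b)}$ share a factor --- is precisely the technical content of \cite[Cor.~3.9]{Kitchens-Schmidt-1993}; it does not fall out by inspection, and since the statement you are proving essentially \emph{is} Masser's theorem in its dynamical formulation, invoking ``Masser's analysis'' as a black box at exactly this point makes the argument nearly circular. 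The converse direction is in better shape: your plan to produce an explicit vanishing intersection along $m=ap^n$ does generalise the paper's earlier lemma that $S(Q)$ is non-mixing for $Q\in\langle P\rangle$, though you would still need to say how a common factor $R$ of $P^{(a)}$ and $Q^{(b)}$ forces a relation in $\mathfrak R_d/\langle P\rangle$ rather than in $\mathfrak R_d/\langle R\rangle$ (this is where irreducibility of $P$ and the passage to the subsequence $ap^n$ actually get used). As written, the proposal is a faithful table of contents for the known proof, not a proof.
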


\begin{cor}\label{ca-shift-is-mixing}
Let $P^{\perp}$ be a linear cellular automaton shift with $P=X_d-\Phi(X_1,\ldots,X_{d-1})$. 
If all elements $(n_1,\ldots,n_d)\in S$ have zero final coordinate $n_d=0$
then $S$ is mixing.  
\end{cor}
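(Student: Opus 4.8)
The plan is to argue by contradiction using Masser's theorem (Theorem~\ref{masser}), exploiting the fact that the defining polynomial $P=X_d-\Phi(X_1,\ldots,X_{d-1})$ genuinely involves the time variable $X_d$, whereas the hypothesis confines $S$ to the hyperplane $\{n_d=0\}$. Note first that $P$ is irreducible by the preceding lemma; the degree-one argument given there works verbatim over any field, so $P$ remains irreducible, hence prime, in $\mathbb{F}_{p^\ell}[X_1^{\pm1},\ldots,X_d^{\pm1}]$ for every $\ell$, which is what Masser's theorem requires.

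First I would reduce to the primitive case. Since $T_{\mathbf n}$ is a measure preserving automorphism, $S$ is mixing if and only if any translate is, so I may pick $\mathbf n_0\in S$ and replace $S$ by $S-\mathbf n_0$. This translate contains $0$ and, because every element of $S$ has vanishing final coordinate and $\mathbf n_0$ does too, it still lies entirely in $\{n_d=0\}$. Thus I may assume $S$ is primitive with $S\subset\{n_d=0\}$.

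Next I would suppose, toward a contradiction, that $S$ is non-mixing. Masser's theorem then furnishes $a,b,\ell\in\mathbb{N}$ and $Q\in\mathbb{F}_{p^\ell}[X_1^{\pm1},\ldots,X_d^{\pm1}]$ with $S(Q)\subset S$ such that $P^{(a)}$ and $Q^{(b)}$ share a common non-unit factor $R$ in $\mathbb{F}_{p^\ell}[X_1^{\pm1},\ldots,X_d^{\pm1}]$. The key observation is a degree count in $X_d$. On the one hand, $S(Q)\subset S\subset\{n_d=0\}$ means every monomial of $Q$ is free of $X_d$, so $Q\in\mathbb{F}_{p^\ell}[X_1^{\pm1},\ldots,X_{d-1}^{\pm1}]$; the same holds for $Q^{(b)}$, giving $\deg_{X_d}Q^{(b)}=0$ and hence $\deg_{X_d}R=0$. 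On the other hand, the irreducible factors of $P^{(a)}$ are all associates of $P=X_d-\Phi$, which has $\deg_{X_d}=1$; since $P$ is prime, any non-unit divisor $R$ of $P^{(a)}$ is an associate of a positive power of $P$ and therefore satisfies $\deg_{X_d}R\ge 1$. These conclusions are incompatible, so no such common factor exists, and by Masser's theorem $S$ is mixing.

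I expect the only point requiring care to be the verification that $P^{(a)}$ retains its $X_d$-dependence in the strong sense that \emph{all} of its irreducible factors involve $X_d$: concretely, that passing from $P$ to $P^{(a)}$ (whether this denotes the coefficient-Frobenius $\Phi\mapsto\Phi^{(a)}$, which is the identity here since $\Phi$ has coefficients in $\mathbb{F}_p$, or the substitution $X_i\mapsto X_i^{p^a}$, which yields $P^{p^a}$ in characteristic $p$) introduces no $X_d$-free factors, together with the irreducibility of $P$ over the extension field $\mathbb{F}_{p^\ell}$. Both are immediate from the degree-one normal form of $P$ in $X_d$, so the argument is otherwise routine.
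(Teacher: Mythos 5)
Your proof is correct and follows essentially the same route as the paper: translate $S$ to a primitive set, invoke Masser's theorem to obtain $Q$ with $S(Q)\subset S$ free of $X_d$, and conclude that any common factor of $Q^{(b)}$ and $P^{(a)}$ must be a unit. The only cosmetic difference is in the last step, where the paper compares against the unit leading coefficient of $P^{(a)}$ in $X_d$ while you use the factorization of $P^{(a)}$ into associates of the prime $P$; both are valid.
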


\begin{proof}
By contradiction.
Suppose that $S$ is not mixing. Then $S-\mathbf n$ is not mixing for any $\mathbf n\in S$.
Since $S-\mathbf n$ is primitive, there exists a $Q$ with shape $S(Q)\subset S-\mathbf n$
such that $Q^{(b)}$ and $P^{(a)}$ have a common factor $R$ for some $a,b$. 
$Q$ is a polynomial with indeterminates $X_1,\ldots,X_{d-1}$.
Multiplying by a monomial, if necessary, we may assume that $R$ is
also a polynomial with indeterminates $X_1,\ldots,X_{d-1}$.
Since $R$ divides $P^{(a)}$, which has unit coefficient at $X_d^a$,
we conclude that $R$ divides $1$:
it is a unit. Therefore $Q^{(b)}$ and $P^{(a)}$ have no common factor.
We conclude that $S$ is mixing.	
\end{proof}

\section{Topological rigidity of linear cellular automaton shifts}

We study equivariant maps between linear cellular automaton shifts.
An equivariant map $\phi$ between $G$-systems does not necessarily preserve $0$. 
However, for the systems that we consider here, it is possible to
replace an equivariant map
that does not preserve $0$ by one
that does, as follows.
Since $0$ is fixed so is $\phi(0)$. Therefore the coordinates of $\phi(0)$ are a
constant $c\in\mathbb F_p\setminus\{0\}$.
In other words, $\phi(0)=(c)$, the Laurent series which has coefficient~$c$ at every monomial. 
Since $Q$ annihilates $(c)$, we conclude $Q^{\perp}$ is closed under subtraction of $(c)$. The map $\bar\phi(x)=\phi(x)-(c)$
is equivariant and preserves $0$. 
This shows that the following definition
is not restrictive. 

\begin{definition}
We say that a $\mathbb Z^d$-equivariant map $\phi\colon P^{\perp}\to Q^{\perp}$
is a \textit{homomorphism} if $\phi(0)=0$. If $\phi$ is continuous, then we
say that it is a \textit{topological homomorphism}. If $\phi$ is a module homomorphism,
then we say that it is an \textit{algebraic homomorphism}.
\end{definition}

Let $S$ be a shape. We say that a map $f\colon S\to \mathbb F_p$ is a \textit{configuration},
or, more specifically, an $S$-\textit{configuration}.
Configurations are a higher-dimensional analogue
of words in shift dynamical systems. If $x\in P^{\perp}$, then $x|_S$ represents the configuration
defined by $s\mapsto x_s$.  
We say that this configuration \textit{occurs}
in $P^{\perp}$. Let $\mathcal L_S(P^\perp)$ be the set of all $S$-configurations which
occur in $P^{\perp}$. 
Since $P^{\perp}$ is a group, $\mathcal L_S(P^{\perp})$ is a group.
A map $\gamma\colon \mathcal L_S(P^{\perp})\to\mathbb F_p$
is called a \textit{coding}, or also a \textit{local rule}. The map
\[
(x)_{\mathbf n\in \mathbb Z^d}\to (\gamma(x|_{\mathbf n+S}))_{\mathbf n\in\mathbb Z^d}
\]
is called a \textit{sliding block code}. It is customary to select a shape
of the form $\{0,\ldots,m\}^d\subset \mathbb Z^d$, i.e., a block, hence the name.
Let $0_S$ denote the zero configuration. 
A topological homomorphism corresponds to a sliding block code such that
$\gamma(0_S)=0$.
The topological homomorphism is a
group homomorphism if and only if $\gamma\colon \mathcal L_S(P^{\perp})\to\mathbb F_p$
is a group homomorphism.

\begin{lem}\label{scaling-converse} 
Let $P^{\perp}$ be a linear cellular automaton shift on a finite field 
$\mathbb F_p$. 
Suppose that $\{\mathbf n_1,\ldots,\mathbf n_k\}\subset \mathbb Z^d$
all have final coordinate equal to zero,
and that $C_1,\ldots, C_k\in\mathcal L_S(P^{\perp})$ for some
shape $S$.
Then there exists $x\in P^{\perp}$ and $m\in \N$  such that 
$x|_{S+p^m\mathbf n_i}=C_i$ for $1\leq i\leq k$.
\end{lem}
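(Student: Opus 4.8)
The plan is to realise the desired point $x$ as an element of a suitable intersection of cylinder sets and to use the mixing corollary to guarantee that this intersection is non-empty. First I would fix, for each $i$, the cylinder set
\[
A_i=\{z\in P^{\perp}\colon z|_S=C_i\}.
\]
Since $C_i\in\mathcal L_S(P^{\perp})$, the configuration $C_i$ occurs in $P^{\perp}$, so $A_i\neq\emptyset$. As $P^{\perp}$ is a compact abelian group and $A_i$ is a coset of the open, finite-index subgroup $\{z\in P^\perp\colon z|_S=0_S\}$, we have $\mu(A_i)>0$; hence $\prod_{i=1}^k\mu(A_i)>0$.

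Next I would translate the target condition into a statement about these cylinders. Unwinding the definition $T_{\mathbf n}(z)_{\mathbf j}=z_{\mathbf j-\mathbf n}$, one checks that $(T_{-p^m\mathbf n_i}x)_{\mathbf s}=x_{\mathbf s+p^m\mathbf n_i}$, so the requirement $x|_{S+p^m\mathbf n_i}=C_i$ is equivalent to $(T_{-p^m\mathbf n_i}x)|_S=C_i$, that is, to $x\in T_{p^m\mathbf n_i}A_i$. Thus it suffices to produce some $m\in\N$ with
\[
\bigcap_{i=1}^k T_{p^m\mathbf n_i}A_i\neq\emptyset,
\]
for then any $x$ in this intersection does the job.

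To force the intersection to be non-empty I would invoke Corollary \ref{ca-shift-is-mixing}. Each $\mathbf n_i$ has zero final coordinate, and hence so does each $-\mathbf n_i$, so the set $\{-\mathbf n_1,\ldots,-\mathbf n_k\}$ is mixing. By the definition of a mixing set this gives
\[
\mu\Big(\bigcap_{i=1}^k T_{m\mathbf n_i}A_i\Big)\longrightarrow \prod_{i=1}^k\mu(A_i)
\]
as $m\to\infty$. Since the right-hand side is strictly positive and the powers $p^m$ tend to infinity, there is an $m_0$ for which $\mu\big(\bigcap_i T_{p^{m_0}\mathbf n_i}A_i\big)>0$; in particular this intersection is non-empty, and taking $m=m_0$ together with any $x$ in it completes the argument.

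The point to get right is bookkeeping rather than a genuine obstacle: one must track the direction of the translations $T_{\pm p^m\mathbf n_i}$, and note that restricting the scaling parameter to powers of $p$ is harmless, because mixing of a set asserts convergence along every sequence of scalings tending to infinity, and $(p^m)_m$ is such a sequence. The real content is carried entirely by Corollary \ref{ca-shift-is-mixing}; once mixing is available for sets with vanishing final coordinate, the lemma follows formally.
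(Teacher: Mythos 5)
Your proposal is correct and follows essentially the same route as the paper: realise the target as an intersection of translated cylinder sets, apply Corollary \ref{ca-shift-is-mixing} to the set of translation vectors (all with vanishing final coordinate), and pass to the subsequence $p^m$. You are in fact slightly more careful than the paper in recording that $\mu(A_i)>0$ and in tracking the sign of the translations.
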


\begin{proof}
Let $A_i\subset P^{\perp}$ be the cylinder set of all elements
such that $x_S=C_i$.
By Corollary~\ref{ca-shift-is-mixing}, the shift is mixing
on $\{\mathbf n_1,\ldots,\mathbf n_k\}$.
Therefore, for large enough $n\in\N$ 
\[T_{-n\mathbf n_1}A_1\cap\cdots\cap T_{-n\mathbf n_k} A_k\]
has non-zero measure, which implies that this intersection is non-empty.
Any element in this intersection satisfies $x|_{S+n\mathbf n_i}=C_i$. 
We can choose $n$ to be a power of $p$. 
\end{proof}

\begin{lem}\label{finite-group-map}
Let $P^\perp$ and $Q^\perp$ be linear cellular automaton shifts.
If $S(P)\not\subset S(Q)$, then the trivial homomorphism is the only topological
homomorphism between $P^{\perp}$ and $Q^{\perp}$.
\end{lem}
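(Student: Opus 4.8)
The plan is to exploit the special form $P=X_d-\Phi$ and $Q=X_d-\Psi$ (with $\Phi,\Psi$ Laurent polynomials in $X_1,\dots,X_{d-1}$, so spatial) together with the scaling afforded by Lemma~\ref{scaling-converse}. Since a topological homomorphism $\phi\colon P^\perp\to Q^\perp$ is a sliding block code, write it via a local rule $\gamma\colon\mathcal L_S(P^\perp)\to\mathbb{F}_p$ on a shape $S$ with $\gamma(0_S)=0$, and suppose for contradiction that $\gamma\not\equiv0$, fixing $C^*\in\mathcal L_S(P^\perp)$ with $\gamma(C^*)\neq0$. Writing $\Phi=\sum_{\mathbf b}\phi_{\mathbf b}X^{\mathbf b}$ and $\Psi=\sum_{\mathbf c}\psi_{\mathbf c}X^{\mathbf c}$, the shapes are $S(P)=\{\mathbf e_d\}\cup S(\Phi)$ and $S(Q)=\{\mathbf e_d\}\cup S(\Psi)$, where $\mathbf e_d=(0,\dots,0,1)$; hence the hypothesis $S(P)\not\subset S(Q)$ produces a \emph{spatial} vector $\mathbf a_0\in S(\Phi)\setminus S(\Psi)$ with $\phi_{\mathbf a_0}\neq0$. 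This $\mathbf a_0$ will drive the contradiction.

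First I would extract a functional identity from equivariance. Applying $\phi\circ T_{\mathbf e_d}=T_{\mathbf e_d}\circ\phi$ (multiplication by $X_d$) and the defining relations $X_d x=\Phi x$ on $P^\perp$ and $X_d y=\Psi y$ on $Q^\perp$ yields $\phi(\Phi x)=\Psi\,\phi(x)$ for all $x\in P^\perp$, and by induction $\phi(\Phi^{N}x)=\Psi^{N}\phi(x)$ for all $N\ge0$. I then specialize $N=p^m$: over $\mathbb{F}_p$ the Frobenius gives $\Phi^{p^m}=\sum_{\mathbf b}\phi_{\mathbf b}X^{p^m\mathbf b}$ and $\Psi^{p^m}=\sum_{\mathbf c}\psi_{\mathbf c}X^{p^m\mathbf c}$, so the supports become the dilates $p^mS(\Phi)$ and $p^mS(\Psi)$. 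Reading off the coefficient of $X^{\mathbf 0}$ in $\phi(\Phi^{p^m}x)=\Psi^{p^m}\phi(x)$ turns the identity into
\[
\gamma\big((\Phi^{p^m}x)|_{S}\big)=\sum_{\mathbf c\in S(\Psi)}\psi_{\mathbf c}\,\gamma\big(x|_{S-p^m\mathbf c}\big).
\]

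Next I would realize a contradiction by placing configurations at the dilated windows. Apply Lemma~\ref{scaling-converse} to the finitely many spatial directions $-(S(\Phi)\cup S(\Psi))$, prescribing the configuration $C:=\phi_{\mathbf a_0}^{-1}C^*$ on the window $S-p^m\mathbf a_0$ and the zero configuration on every other window $S-p^m\mathbf b$, with $\mathbf b\in(S(\Phi)\cup S(\Psi))\setminus\{\mathbf a_0\}$, choosing $m$ large enough that all these windows are pairwise disjoint. On the right-hand side every $\mathbf c\in S(\Psi)$ satisfies $\mathbf c\neq\mathbf a_0$, so each $x|_{S-p^m\mathbf c}=0_S$ and the whole sum vanishes. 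On the left-hand side $(\Phi^{p^m}x)_{\mathbf s}=\sum_{\mathbf b\in S(\Phi)}\phi_{\mathbf b}\,x_{\mathbf s-p^m\mathbf b}$ for $\mathbf s\in S$; for large $m$ the point $\mathbf s-p^m\mathbf b$ lands in the prescribed window $S-p^m\mathbf b$, which is zero unless $\mathbf b=\mathbf a_0$, so $(\Phi^{p^m}x)|_S=\phi_{\mathbf a_0}C=C^*$ and the left-hand side equals $\gamma(C^*)\neq0$. This contradiction forces $\gamma\equiv0$, i.e.\ $\phi$ is trivial.

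The main obstacle is the Frobenius dilation step: it is precisely what converts the purely set-theoretic discrepancy $\mathbf a_0\in S(\Phi)\setminus S(\Psi)$ into a geometric separation of windows, so that a single monomial survives on each side. The care needed is in the bookkeeping that large $m$ makes the windows disjoint and forces each $\mathbf s-p^m\mathbf b$ into exactly one prescribed window. A feature of this route worth flagging is that it never uses additivity of $\phi$, which is essential because a topological homomorphism need not be a group homomorphism; the entire contradiction is produced from one carefully chosen configuration together with the normalization $\gamma(0_S)=0$.
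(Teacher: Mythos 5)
Your proof is correct and follows essentially the same route as the paper's: Frobenius dilation of the shapes combined with Lemma~\ref{scaling-converse} to plant a nonzero configuration only in the window indexed by a direction in $S(\Phi)\setminus S(\Psi)$ and the zero configuration elsewhere, forcing $\gamma$ to vanish on an arbitrary configuration. The only cosmetic difference is that you package the two annihilation relations $P^{p^m}x=0$ and $Q^{p^m}\phi(x)=0$ into the single intertwining identity $\phi(\Phi^{p^m}x)=\Psi^{p^m}\phi(x)$, whereas the paper applies them separately at the coordinate $(0,\dots,0,-p^m)$.
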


\begin{proof}
Let $\phi\colon P^\perp\to Q^\perp$ be a homomorphism with local rule 
$\gamma\colon \mathcal L_S(P^\perp)\to \mathbb F_p$ for some shape $S$. 
Let $P=X_d-\Phi$ and $Q=X_d-\Psi$
and let $S(\Phi)\cup S(\Psi)=\{\mathbf n_1,\ldots,\mathbf n_k\}$.
By Corollary~\ref{ca-shift-is-mixing}, $\{-\mathbf n_1,\ldots,-\mathbf n_k\}$
 is a mixing set for~$P^\perp$. By Lemma~\ref{scaling-converse}, for any $k$ configurations
$C_i\in\mathcal L_S(P^\perp)$ there exists an $x\in P^\perp$ such that
$x|_{S-p^m\mathbf n_i}=C_i$. Since $S(P)\not\subset S(Q)$, there exists an $\mathbf n_j$
which is in $S(\Phi)$ but not in $S(\Psi)$.  For all $i\not=j$ we take $C_i$ to be the
zero configuration. For $C_j$ we take an arbitrary configuration.

Since $x|_{S-p^m\mathbf n_i}=C_i$, by the local rule $\phi(x)|_{-p^m\mathbf n_i}=\gamma(C_i)$.
All of these configurations except at $C_j$ are zero. In particular $\gamma(C_i)=0$ for all
$i$ such that $\mathbf n_i$ occurs in $\Psi$.
Since $\phi(x)$ is annihilated by $Q^{p^m}=X_d^{p^m}-\Phi^{p^m}$, and since
$S(\Phi^{p^m})=p^mS(\Phi)$, we have that \[\phi(x)_{(0,\ldots,0,-p^m)}=0.\]
In other words, if all of the coefficients that correspond to $\Psi$ are zero, then 
$Q^{p^m}$ can only annihilate $\phi(x)$ if the remaining coefficient corresponding to
$X_d^{p^m}$ is zero as well. 
A similar argument applies to $P^{p^m}$, but here we have that the coefficient
corresponding to $\mathbf n_j$ is not zero. So here the coefficient corresponding
to $X_d^{p^m}$ is non-zero. The configuration at $x|_{S+(0,\ldots,0,-p^m)}$
has to match up against $C_j$. More specifically, if 
$c$ is the coefficient of $\Psi$ at $X^{\mathbf n_j}$, then
\[
x_{S+(0,\ldots,0,-p^m)}=cC_j
\]
By the local rule $\gamma(cC_j)=0$ and since $C_j$ is arbitrary and $c$ is non-zero,
the local rule is trivial.
\end{proof}

A Laurent polynomial is a series
\[
\sum_{\mathbf n\in\Z^d} c_P(\mathbf n)X^\mathbf n 
\]  
in which all but finitely many coefficients $c_P(\mathbf n)$ are zero.
Now think of $c_P$ as a map of $\mathbf Z^d$ into the $1$-configurations.
Extending this, for a shape $S$ let $C_{P,S}$ be a map of $\mathbf Z^d$
into $\mathcal L_S(P^{\perp})$.
For a Laurent polynomial $Q$ we denote 
\[
Q\cdot C_{P,S}=\sum_{\mathbf n\in\Z^d} c_Q(\mathbf n)C_{P,S}(\mathbf n).
\]
Since $\mathcal L_S(P^\perp)$ is a group and since this is a finite sum,
$Q\cdot C_{P,S}$ is a well-defined configuration. 
If $\gamma\colon \mathcal L_S(P^\perp)\to \mathbb F_p$ is a local rule,
then $\gamma\circ C_{P,S}$ is a map $\Z^d\to\mathbb F_p$,
which we denote by $\gamma(C_{P,S})$. 

\begin{lem}
Let $P^\perp, Q^\perp$ be linear cellular automaton shifts
such that $S(P)\subset S(Q)$ and such that
$P=X_d-\Phi$ and $Q=X_d-\Psi$. A topological
homomorphism between $P^{\perp}$ and $Q^{\perp}$ with
local rule $\gamma$ satisfies
the functional equation
\[
\gamma(\Phi\cdot C_{P,S})=\Psi\cdot\gamma(C_{P,S})
\]
\end{lem}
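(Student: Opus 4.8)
The plan is to test the identity against an arbitrary point $x\in P^{\perp}$, taking $C_{P,S}$ to be the configuration-valued function of $x$, namely $C_{P,S}(\mathbf n)=(X^{\mathbf n}x)|_S\in\mathcal L_S(P^\perp)$, and to write $c_\Phi,c_\Psi$ for the coefficient functions of $\Phi,\Psi$. With this choice the sliding block code (at the origin) together with equivariance of $\phi$ gives, for every $\mathbf n$,
\[
\gamma(C_{P,S})(\mathbf n)=\gamma\bigl((X^{\mathbf n}x)|_S\bigr)=\phi(X^{\mathbf n}x)_{\mathbf 0}=(X^{\mathbf n}\phi(x))_{\mathbf 0}=\phi(x)_{-\mathbf n}.
\]
Thus $\gamma(C_{P,S})$ is just a reflection of the point $\phi(x)\in Q^{\perp}$. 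The whole proof then reduces to showing that both sides of the functional equation compute the single coefficient $\phi(x)_{-\mathbf e_d}$, where $\mathbf e_d=(0,\dots,0,1)$ is the unit vector in the time direction.

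For the left-hand side the key observation, and the step I expect to carry the real content, is that the formal combination $\Phi\cdot C_{P,S}=\sum_{\mathbf m}c_\Phi(\mathbf m)C_{P,S}(\mathbf m)$ collapses to a single genuine configuration. Since restriction to $S$ is additive and the sum is finite,
\[
\Phi\cdot C_{P,S}=\sum_{\mathbf m}c_\Phi(\mathbf m)(X^{\mathbf m}x)|_S=\Bigl(\sum_{\mathbf m}c_\Phi(\mathbf m)X^{\mathbf m}\,x\Bigr)\Big|_S=(\Phi x)|_S.
\]
Because $x\in P^{\perp}$ we have $(X_d-\Phi)x=0$, i.e.\ $\Phi x=X_dx$, so $\Phi\cdot C_{P,S}=(X_dx)|_S$, which is a bona fide $S$-configuration occurring in $P^{\perp}$ (as $P^{\perp}$ is $X_d$-invariant). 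Applying the local rule and equivariance once more,
\[
\gamma(\Phi\cdot C_{P,S})=\gamma\bigl((X_dx)|_S\bigr)=\phi(X_dx)_{\mathbf 0}=(X_d\phi(x))_{\mathbf 0}=\phi(x)_{-\mathbf e_d}.
\]

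For the right-hand side I would simply unwind the definition and invoke that $\phi(x)$ lies in $Q^{\perp}$. Using the formula for $\gamma(C_{P,S})$ from the first paragraph,
\[
\Psi\cdot\gamma(C_{P,S})=\sum_{\mathbf m}c_\Psi(\mathbf m)\phi(x)_{-\mathbf m}=(\Psi\phi(x))_{\mathbf 0},
\]
and since $\phi(x)\in Q^{\perp}$ satisfies $(X_d-\Psi)\phi(x)=0$, this equals $(X_d\phi(x))_{\mathbf 0}=\phi(x)_{-\mathbf e_d}$. Comparing the two computations yields the functional equation.

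The subtle point worth flagging is that $\gamma$ is only a local rule with $\gamma(0_S)=0$, and is \emph{not} assumed additive; so a priori $\gamma$ does not commute with the $\mathbb F_p$-linear combination defining $\Phi\cdot C_{P,S}$. What rescues the argument is precisely the collapse in the second paragraph: the hypothesis $x\in P^{\perp}$ forces $\Phi\cdot C_{P,S}$ to be a single shifted configuration \emph{before} $\gamma$ is applied, so one never needs linearity of $\gamma$. Everything else is bookkeeping with the shift conventions, chiefly the reflection $\mathbf n\mapsto-\mathbf n$ produced by equivariance and the choice of base point, which I would fix once at the outset so that the two occurrences of $(X_d\phi(x))_{\mathbf 0}$ line up.
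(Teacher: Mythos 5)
Your computation is internally consistent, but it proves a strictly weaker statement than the lemma. In the paper, $C_{P,S}$ is an \emph{arbitrary} map from $\Z^d$ into $\mathcal L_S(P^\perp)$: the configurations $C_{P,S}(\mathbf n)$ at the various $\mathbf n\in S(\Psi)$ may be chosen independently of one another. You instead take $C_{P,S}(\mathbf n)=(X^{\mathbf n}x)|_S$ for a single point $x\in P^\perp$, so your configurations are the overlapping windows of one point and are heavily correlated. For that special choice the identity does hold, by exactly your computation --- but it collapses to the tautology that $Q\cdot\phi(x)=0$, i.e.\ to the hypothesis $\phi(x)\in Q^\perp$ read off at one coordinate. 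It gives no information about the local rule $\gamma$ beyond what is assumed, and in particular it cannot feed the next lemma, whose proof sets $C_{P,S}(\mathbf m)=C$ and $C_{P,S}(\mathbf n)=D$ for \emph{independent} arbitrary $C,D$ with all other values zero in order to extract additivity of $\gamma$. Such an assignment is in general not of the form $\mathbf k\mapsto (X^{\mathbf k}x)|_S$, since adjacent windows of a single point overlap and constrain each other.

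The step you flag as the one that ``rescues'' the argument --- the collapse $\Phi\cdot C_{P,S}=(\Phi x)|_S=(X_d x)|_S$ --- is precisely where the real content of the lemma is being bypassed. For a general $C_{P,S}$ the formal combination $\sum_{\mathbf m}c_\Phi(\mathbf m)C_{P,S}(\mathbf m)$ must still be realized as a single genuine window of some point of $P^\perp$ before $\gamma$ can be applied to it, and that is what the paper's proof supplies: by Corollary~\ref{ca-shift-is-mixing} and Lemma~\ref{scaling-converse} there exist $x\in P^\perp$ and $m\in\N$ with $x|_{S-p^m\mathbf n}=C_{P,S}(\mathbf n)$ for all $\mathbf n\in S(\Psi)$, i.e.\ the prescribed configurations are planted at $p^m$-dilated positions where they no longer interfere; the Frobenius identity $P^{p^m}=X_d^{p^m}-\Phi^{p^m}$ with $S(\Phi^{p^m})=p^mS(\Phi)$ then forces $x|_{S-(0,\ldots,0,p^m)}=\Phi\cdot C_{P,S}$, and applying the sliding block code together with $Q^{p^m}\cdot\phi(x)=0$ yields the two sides of the functional equation. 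The missing idea in your write-up is this mixing/dilation step; without it you have only verified the lemma on a degenerate family of inputs for which it is vacuous.
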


\begin{proof}
Since $S(\Psi)$ is a mixing set there exists $x\in P^{\perp}$
and $m\in\N$ such that $x|_{S-p^m\mathbf n}=C_{P,S}(\mathbf n)$
for $\mathbf n\in S(\Psi)$. 
Since $P^{p^m}\cdot x=0$ we have 
\[x|_{S-(0,\ldots,0,p^m)}=\Phi\cdot C_{P,S}.\]
If $\phi$ is the topological homomorphism with local rule~$\gamma$,
then $\phi(x)(-\mathbf n)=\gamma(C_{P,S}(\mathbf n))$ for $\mathbf n\in S(\Psi)$
and $\phi(x)(0,\ldots,0,-p^m)=\gamma(\Phi\cdot C_{P,S})$.
Since $Q^{p^m}\cdot\phi(x)=0$ we also have
\[\phi(x)|_{(0,\ldots,0,-p^m)}=\Psi\cdot \gamma(C_{P,S}).\]
\end{proof}

\begin{lem}
Let $P^\perp, Q^\perp$ be linear cellular automaton shifts
such that $S(P)\subset S(Q)$. A topological
homomorphism between $P^{\perp}$ and $Q^{\perp}$ 
is an algebraic homomorphism.
\end{lem}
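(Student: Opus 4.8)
The plan is to show that the local rule $\gamma\colon \mathcal L_S(P^\perp)\to\mathbb F_p$ of a topological homomorphism is necessarily a group homomorphism, which—by the remark preceding Lemma~\ref{scaling-converse}—is equivalent to the statement that $\phi$ is algebraic. The main tool will be the functional equation $\gamma(\Phi\cdot C_{P,S})=\Psi\cdot\gamma(C_{P,S})$ established in the previous lemma, together with the freedom, granted by Lemma~\ref{scaling-converse} and Corollary~\ref{ca-shift-is-mixing}, to prescribe finitely many configurations independently along a scaled horizontal mixing set.

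\medskip

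First I would fix two arbitrary configurations $C,C'\in\mathcal L_S(P^\perp)$ and aim to prove $\gamma(C+C')=\gamma(C)+\gamma(C')$. The idea is to realize $C$, $C'$, and enough zero configurations simultaneously as restrictions $x|_{S+p^m\mathbf n_i}$ of a single point $x\in P^\perp$, placing them at positions indexed by the shape $S(\Phi)=S(\Psi)$ (here using $S(P)\subset S(Q)$, so that the shapes of $\Phi$ and $\Psi$ agree up to the $X_d$-term). Because multiplication by $\Phi$ advances one time layer, the configuration appearing one step later is a prescribed $\mathbb F_p$-linear combination $\sum_i c_i C_i$ of the placed configurations; applying the functional equation expresses $\gamma$ of this combination in terms of the values $\gamma(C_i)$ through the \emph{same} linear combination with the coefficients of $\Psi$. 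Since $\Phi$ and $\Psi$ carry the same coefficients on the common shape, comparing the two sides forces $\gamma$ to respect that particular linear combination.

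\medskip

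The key step is then to leverage the arbitrariness of the configurations and of the scaling exponent $m$. By choosing the $C_i$ to isolate first additivity (set all but two of them to zero, with coefficient~$1$, to extract $\gamma(C+C')=\gamma(C)+\gamma(C')$) and then scalar homogeneity (place a single $C$ with a coefficient $c\in\mathbb F_p^\times$, to extract $\gamma(cC)=c\,\gamma(C)$, exactly as in the last paragraph of Lemma~\ref{finite-group-map}), I would conclude that $\gamma$ is $\mathbb F_p$-linear on $\mathcal L_S(P^\perp)$, i.e.\ a group homomorphism. A sliding block code whose local rule is a group homomorphism is itself a group homomorphism of $P^\perp$, and being continuous and $\mathbb Z^d$-equivariant it is a module homomorphism; hence $\phi$ is an algebraic homomorphism.

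\medskip

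The hard part will be arranging the bookkeeping so that the functional equation actually separates the contribution of each $C_i$ cleanly: one must ensure that the positions $S+p^m\mathbf n_i$ are disjoint for distinct $i$ (so that the configurations can be prescribed independently via Lemma~\ref{scaling-converse}) and that the $X_d$-terms of $P^{p^m}$ and $Q^{p^m}$ contribute exactly the monomial $X_d^{p^m}$ without overlap, so that the single ``next-layer'' coefficient is read off unambiguously. This is the same mechanism that drove Lemma~\ref{finite-group-map}, and the assumption $S(P)\subset S(Q)$ is precisely what guarantees that every monomial of $\Phi$ is matched by a monomial of $\Psi$, so no term of the functional equation is lost.
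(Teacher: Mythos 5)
Your overall strategy is the paper's: use the functional equation $\gamma(\Phi\cdot C_{P,S})=\Psi\cdot\gamma(C_{P,S})$, place two arbitrary configurations at two positions of $S(\Phi)$ with zeros elsewhere, and read off additivity of the local rule, which then makes $\phi$ an equivariant group homomorphism and hence a module homomorphism. However, there is one genuine error in the way you execute the key step. You assert that $S(P)\subset S(Q)$ means ``the shapes of $\Phi$ and $\Psi$ agree'' and, worse, that ``$\Phi$ and $\Psi$ carry the same coefficients on the common shape.'' Neither is implied by the hypothesis: $S(P)\subset S(Q)$ is a statement about supports only, giving $S(\Phi)\subseteq S(\Psi)$ (possibly strict containment), and says nothing about the values of the coefficients. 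Consequently your extraction of additivity ``with coefficient $1$'' and of homogeneity in the form $\gamma(cC)=c\,\gamma(C)$ does not follow: placing $C$ at $\mathbf m$ and $D$ at $\mathbf n$ yields
\[
\gamma(c_{\mathbf m}C+c_{\mathbf n}D)=d_{\mathbf m}\gamma(C)+d_{\mathbf n}\gamma(D),
\]
where $c_{\mathbf m},c_{\mathbf n}$ are the coefficients of $\Phi$ and $d_{\mathbf m},d_{\mathbf n}$ those of $\Psi$, and in general $c_{\mathbf m}\neq d_{\mathbf m}$.

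The repair is exactly what the paper does and is short: specialize $D=0$ to get $\gamma(c_{\mathbf m}C)=d_{\mathbf m}\gamma(C)$ and $C=0$ to get $\gamma(c_{\mathbf n}D)=d_{\mathbf n}\gamma(D)$, then substitute back to obtain $\gamma(c_{\mathbf m}C+c_{\mathbf n}D)=\gamma(c_{\mathbf m}C)+\gamma(c_{\mathbf n}D)$; since $c_{\mathbf m},c_{\mathbf n}$ are nonzero elements of $\mathbb F_p$ and $C,D$ range over all of $\mathcal L_S(P^\perp)$, this is full additivity, and $\mathbb F_p$-homogeneity is then automatic because every scalar is a sum of $1$'s. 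The strict containment $S(\Phi)\subsetneq S(\Psi)$ causes no trouble either, since the extra terms of $\Psi$ multiply $\gamma(0_S)=0$. Your final paragraph's worry about disjointness of the blocks $S+p^m\mathbf n_i$ is already absorbed by Lemma~\ref{scaling-converse}, whose mixing argument produces a single point realizing all prescribed configurations for $m$ large; no extra bookkeeping is needed there.
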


\begin{proof}
We need to show that the local rule $\gamma$ satisfies
$\gamma(C+D)=\gamma(C)+\gamma(D)$ for $C,D\in \mathcal L_S(P^{\perp})$.
Let $\mathbf m,\mathbf n\in S(P)$ and let $c_m,c_n$ be the corresponding
coefficients in $P$, while $d_m,d_n$ are the coefficients in $Q$.
By choosing $C_{S,P}(\mathbf m)=C$ and $C_{S,P}(\mathbf n)=D$ and
all other shapes zero, we obtain that 
\[
\gamma(c_mC+c_nD)=d_m\gamma(C)+d_n\gamma(D)
\]
by the functional equation in the previous lemma. 
Taking the second shape to be zero, we obtain that
$\gamma(c_mC)=d_m\gamma(C)$ for any shape $C$.
Taking the first shape to be zero, we obtain
$\gamma(c_nD)=d_n\gamma(D)$.
Therefore
\[
\gamma(c_mC+c_nD)=\gamma(c_mC)+\gamma(c_nD).
\]
The homomorphism is algebraic.
\end{proof}

\begin{thm}\label{homomorphisms-are-algebraic}
A topological homomorphism between linear cellular automata  
$P^{\perp}$ and $Q^{\perp}$ is algebraic. 
The shifts are topologically
isomorphic if and only if $P^\perp=Q^\perp$.
\end{thm}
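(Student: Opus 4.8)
The plan is to assemble the first assertion from the three preceding lemmas, and then to upgrade a bijective algebraic homomorphism to an actual identity of shifts by an annihilator argument.

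For the statement that every topological homomorphism is algebraic, I would argue by cases according to whether $S(P)\subset S(Q)$. If $S(P)\not\subset S(Q)$, then Lemma~\ref{finite-group-map} tells us that the only topological homomorphism is the trivial one, and the trivial map is visibly an algebraic homomorphism. If instead $S(P)\subset S(Q)$, then the lemma immediately preceding this theorem already shows that every topological homomorphism is algebraic. These two cases are exhaustive, so every topological homomorphism $P^\perp\to Q^\perp$ is algebraic, proving the first sentence.

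For the equivalence, the ``if'' direction is immediate: when $P^\perp=Q^\perp$ the identity map is a topological isomorphism. For the ``only if'' direction, suppose $\phi\colon P^\perp\to Q^\perp$ is a topological isomorphism. Using the normalization described before the definition of a homomorphism (subtracting the constant configuration $\phi(0)$), I would first replace $\phi$ by a topological \emph{homomorphism} $\bar\phi$; since subtracting a constant is a translation, $\bar\phi$ is still a bijection. By the first assertion, $\bar\phi$ is then an algebraic homomorphism, i.e.\ a module homomorphism over $\mathbb F_p[X_1^{\pm1},\ldots,X_d^{\pm1}]$, and a bijective module homomorphism is an algebraic isomorphism (its set-theoretic inverse is automatically a module homomorphism).

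It remains to deduce $P^\perp=Q^\perp$ from the existence of an algebraic isomorphism. A module isomorphism preserves annihilators: for any ring element $R$ one has $R\cdot P^\perp=0$ if and only if $R\cdot Q^\perp=0$, because $\bar\phi(Rx)=R\bar\phi(x)$ and $\bar\phi$ is a bijection. Since the annihilator of $P^\perp$ is $\langle P\rangle$ and that of $Q^\perp$ is $\langle Q\rangle$, this gives $\langle P\rangle=\langle Q\rangle$. As the Laurent polynomial ring is a domain, $P$ and $Q$ then differ by a unit, so multiplication by $P$ and by $Q$ have the same kernel in $\mathbb F_p[[X_1^{\pm1},\ldots,X_d^{\pm1}]]$; that common kernel is precisely $P^\perp=Q^\perp$. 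I expect this last paragraph to be the only delicate point: the real content --- the additivity of the local rule --- is already carried by the lemmas, so the main obstacle here is merely to argue that an \emph{abstract} module isomorphism forces the defining ideals, and hence the shifts themselves, to coincide rather than to be merely isomorphic.
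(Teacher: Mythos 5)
Your proposal is correct and follows essentially the same route as the paper: the same case split on whether $S(P)\subset S(Q)$ using the two preceding lemmas, followed by the same annihilator-preservation argument giving $\langle P\rangle=\langle Q\rangle$. The only (harmless) difference is at the very end: you pass from $\langle P\rangle=\langle Q\rangle$ to equality of the kernels and hence $P^\perp=Q^\perp$ directly, whereas the paper notes that the normalization $P=X_d-\Phi$, $Q=X_d-\Psi$ even forces $P=Q$; your added details on normalizing $\phi(0)=0$ and on inverses of bijective module homomorphisms are points the paper leaves implicit.
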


\begin{proof}
By the previous lemma, we already know that a topological homomorphism between cellular automaton shifts is algebraic if $S(P)\subset S(Q)$.
We also saw that it is trivial if $S(P)\not\subset S(Q)$.
Therefore, topological homomorphisms are necessarily algebraic.

Let $f\colon M\to N$ be a module isomorphism.
If $r$ annihilates $M$, then $rN=rf(M)=f(rM)=0$ 
and therefore $r$ annihilates $N$. 
Since $f$ is an isomorphism,
the same argument applies to $f^{-1}$. The annihilators are equal,
which in our case means that $\langle P\rangle=\langle Q\rangle $.
For linear cellular automaton shifts this even implies that $P=Q$.
\end{proof}

\begin{cor}\label{homomorphisms-are-trivial}
A homomorphism between linear cellular automata $P^\perp$ and $Q^\perp$ is trivial if $P\not=Q$.
\end{cor}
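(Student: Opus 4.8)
The plan is to derive this corollary directly from Theorem~\ref{homomorphisms-are-algebraic}, which has already established the two dichotomous cases needed. I would argue by contraposition: assume $\phi\colon P^\perp\to Q^\perp$ is a nontrivial homomorphism and show that this forces $P=Q$. Since any equivariant map can be normalized to fix $0$ (as explained in the paragraph preceding the definition of homomorphism), it suffices to consider the case $\phi(0)=0$.

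\begin{proof}
Suppose $\phi\colon P^\perp\to Q^\perp$ is a homomorphism that is nontrivial. By Theorem~\ref{homomorphisms-are-algebraic}, every topological homomorphism is algebraic; in particular it is continuous, so $\phi$ is a topological homomorphism and the results of the preceding lemmas apply. By Lemma~\ref{finite-group-map}, if $S(P)\not\subset S(Q)$ then the only topological homomorphism is the trivial one. Since $\phi$ is nontrivial, we must have $S(P)\subset S(Q)$. In this case Theorem~\ref{homomorphisms-are-algebraic} shows that $\phi$ is an algebraic homomorphism, i.e. a module homomorphism.

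It remains to deduce $P=Q$ from the existence of a nontrivial module homomorphism. A nontrivial module homomorphism has nontrivial image, and by the annihilator argument in the proof of Theorem~\ref{homomorphisms-are-algebraic}, the polynomial $P$ annihilates the image $\phi(P^\perp)\subset Q^\perp$. For linear cellular automaton shifts $P=X_d-\Phi$ and $Q=X_d-\Psi$, both polynomials are monic of degree one in $X_d$, so the condition $S(P)\subset S(Q)$ together with $P$ annihilating a nontrivial submodule forces $P$ and $Q$ to share a common factor; since both are irreducible, this means $P=Q$ up to a unit, and primitivity pins down $P=Q$ exactly. The main obstacle is confirming that a nontrivial homomorphism cannot have image trivially annihilated—equivalently, that the inclusion $S(P)\subset S(Q)$ cannot be strict when a nontrivial algebraic map exists; this reduces to comparing shapes and degrees, which for these degree-one-in-$X_d$ polynomials is forced by the monic leading coefficient.
\end{proof}
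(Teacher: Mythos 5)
The reduction in your first paragraph is fine and matches the paper's setup: a nontrivial homomorphism forces $S(P)\subset S(Q)$ by Lemma~\ref{finite-group-map}, and Theorem~\ref{homomorphisms-are-algebraic} then makes $\phi$ a module homomorphism. The gap is in your final step. You claim that $P$ annihilating a nontrivial submodule of $Q^\perp$ forces $P$ and $Q$ to share a common factor. That implication is false. The submodule of $Q^\perp$ annihilated by $P$ is the Pontryagin dual of $R/\langle P,Q\rangle$ with $R=\mathbb F_p[X_1^{\pm 1},\ldots,X_d^{\pm 1}]$, and the ideal $\langle P,Q\rangle$ can be proper even when the irreducibles $P$ and $Q$ are coprime, because $R$ has Krull dimension $d>1$. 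Concretely, over $\mathbb F_2$ take $P=X_2+1+X_1$ and $Q=P+X_1^2(1+X_1+X_1^2)=X_2+1+X_1+X_1^2+X_1^3+X_1^4$. Then $S(P)\subset S(Q)$, both are irreducible linear cellular automaton polynomials, $P\neq Q$, yet $\langle P,Q\rangle=\langle P,\,1+X_1+X_1^2\rangle$ is a proper ideal with quotient $\mathbb F_4$, so $Q^\perp$ contains a nontrivial submodule killed by $P$. Hence the mere existence of a nontrivial $P$-annihilated submodule of $Q^\perp$ cannot be what forces $P=Q$; what you need is that no such submodule can be the \emph{image of a module homomorphism} from $P^\perp$. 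Your closing sentence acknowledges this obstacle but does not resolve it: monicity in $X_d$ and comparison of shapes give no such conclusion.

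The missing ingredient, which is exactly what the paper's proof supplies, is Pontryagin duality applied to $\phi$ itself rather than to its image. The dual map $\widehat\phi\colon\widehat{Q^\perp}\to\widehat{P^\perp}$ is a module homomorphism between the \emph{cyclic} modules $R/\langle Q\rangle$ and $R/\langle P\rangle$, hence is multiplication by the single element $u=\widehat\phi(1)$. Since $Q$ represents $0$ in $R/\langle Q\rangle$, we get $P\mid Qu$; as $P$ and $Q$ are distinct (non-associate) irreducibles in the UFD $R$, this forces $P\mid u$, so $\widehat\phi=0$ and $\phi$ is trivial. Note that once algebraicity is known from Theorem~\ref{homomorphisms-are-algebraic}, this divisibility argument handles all $P\neq Q$ at once, so the case distinction on whether $S(P)\subset S(Q)$ is not needed in the corollary's proof.
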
 

\begin{proof}
We will use that the Pontryagin dual of $P^\perp$ is 
\[
\widehat{P^\perp}=
\mathbb F_p[X_1^{\pm 1},\ldots,X_d^{\pm 1}]/\langle P\rangle.
\]
A homomorphism $\phi\colon P^\perp\to Q^\perp$ is algebraic, i.e.,
it is a module homomorphism. 
Its Pontryagin dual $\widehat\phi\colon\widehat{Q^\perp}
\to \widehat{P^\perp}$
is a module homomorphism as well.
It follows that $\widehat\phi(R)=R\widehat\phi(1)$.
Therefore, the module homomorphism
is determined by the value $\widehat\phi(1)$.
Now $Q$ represents $0$ in $\widehat{Q^\perp}$ and therefore
$Q\widehat\phi(1)$ represents $0$ in $\widehat{P^\perp}$.
In other words, $P$ divides $Q\widehat\phi(1)$. Since
$P$ and $Q$ are irreducible and since 
$\mathbb F_p[X_1^{\pm 1},\ldots,X_d^{\pm 1}]$ is a unique
factorization domain, $P$ divides $\widehat\phi(1)$.
The homomorphism is trivial.
\end{proof}

The automorphism group of a one-dimensional
Markov shift is large. In general, it is non-amenable~\cite{Boyle-Lind-Rudolph-1988}.
In contrast, the automorphism group of a linear cellular automaton shift
is small. 

\begin{cor}\label{automorphism-group-is-finitely-generated}
The automorphism group of a linear cellular automaton shift is a finitely
generated abelian group.
\end{cor}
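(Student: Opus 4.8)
The plan is to identify the automorphism group with the unit group of a commutative ring and then compute that unit group. An automorphism of $P^{\perp}$ is an invertible topological homomorphism, so by Theorem~\ref{homomorphisms-are-algebraic} it is a module automorphism; thus $\Aut(P^{\perp})$ coincides with the group of module automorphisms of $P^{\perp}$. Dualising exactly as in the proof of Corollary~\ref{homomorphisms-are-trivial}, where $\widehat{P^{\perp}}=R/\langle P\rangle$ with $R=\F_p[X_1^{\pm 1},\ldots,X_d^{\pm 1}]$, each module endomorphism $\phi$ corresponds to a module endomorphism $\widehat\phi$ of $R/\langle P\rangle$, and $\widehat\phi(S)=S\,\widehat\phi(1)$ forces $\widehat\phi$ to be multiplication by the single element $\widehat\phi(1)$. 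Hence $\End$ of the cyclic module $R/\langle P\rangle$ is just $R/\langle P\rangle$ acting by multiplication, and $\phi$ is invertible if and only if $\widehat\phi(1)$ is a unit. Since $R/\langle P\rangle$ is commutative, this yields a group isomorphism $\Aut(P^{\perp})\cong (R/\langle P\rangle)^{\times}$, which is in particular abelian. (Here I read ``automorphism'' in the sense fixed by the paper, namely an invertible \emph{homomorphism}, i.e.\ one preserving $0$, so that scalings are allowed but translations by fixed constants, which need not preserve $0$, are not.)

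Second, I would identify the ring $R/\langle P\rangle$ concretely. Writing $A=\F_p[X_1^{\pm 1},\ldots,X_{d-1}^{\pm 1}]$ (the ring $\mathfrak R_{d-1}$ of the irreducibility lemma), the relation $X_d=\Phi$ gives $A[X_d]/(X_d-\Phi)\cong A$ via $X_d\mapsto\Phi$, because $X_d-\Phi$ is monic of degree one over $A$. Inverting the image $\Phi$ of the invertible element $X_d$, exactly as in the localisation argument preceding that lemma, passes from $A[X_d]$ to $A[X_d^{\pm 1}]=R$ and shows $R/\langle P\rangle\cong A[\Phi^{-1}]$, the localisation of $A$ at the multiplicative set generated by $\Phi$. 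It therefore remains to prove that $(A[\Phi^{-1}])^{\times}$ is finitely generated.

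Finally, I would compute this unit group using that $A$ is a unique factorisation domain whose units are exactly the monomials $c\,X_1^{m_1}\cdots X_{d-1}^{m_{d-1}}$ with $c\in\F_p^{\times}$, so that $A^{\times}\cong \Z/(p-1)\times\Z^{d-1}$. Factoring $\Phi=u\,\pi_1\cdots\pi_r$ into irreducibles, inverting $\Phi$ is the same as inverting $\pi_1,\ldots,\pi_r$, and since $A$ is a UFD the primes surviving in $A[\Phi^{-1}]$ are precisely those not associate to some $\pi_i$. An element of $A[\Phi^{-1}]$ is then a unit exactly when all its valuations at the surviving primes vanish, so the $\pi_i$-adic valuations give a surjection $(A[\Phi^{-1}])^{\times}\to\Z^{r}$ with kernel $A^{\times}$, split because $\Z^{r}$ is free. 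Hence $\Aut(P^{\perp})\cong (A[\Phi^{-1}])^{\times}\cong A^{\times}\times\Z^{r}\cong \Z/(p-1)\times\Z^{\,d-1+r}$, which is finitely generated and abelian. The main obstacle is this last step: one must pin down the units of the localisation, which is where the UFD structure of $A$ and the explicit description of $A^{\times}$ do the real work, whereas the duality bookkeeping in the first two steps is routine given the earlier results.
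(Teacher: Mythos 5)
Your proof is correct and follows essentially the same route as the paper: dualise, identify $0$-preserving automorphisms with units of $\F_p[X_1^{\pm 1},\ldots,X_d^{\pm 1}]/\langle P\rangle$, recognise this ring as a localisation of the Laurent polynomial ring in $X_1,\ldots,X_{d-1}$ obtained by inverting $\Phi$, and read off the unit group from the UFD structure. You go one step beyond the paper by making the valuation argument and the resulting explicit isomorphism $\Aut(P^{\perp})\cong\Z/(p-1)\times\Z^{\,d-1+r}$ precise, which is the step the paper leaves implicit in its closing sentence.
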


\begin{proof}
Again, it is easier to consider automorphisms of the dual module.
We found that a module homomorphism is a multiplication by a 
Laurent polynomial $R$.
In case the homomorphism is an automorphism, there exists a $Q$ such
that multiplication by $QR$ is equivalent to multiplication by $1$.
In other words, $R$ is a unit in 
$\mathbb F_p[X_1^{\pm 1},\ldots,X_d^{\pm 1}]/\langle P\rangle$, which is
 the localization of the ring
$\mathbb F_p[X_1,\ldots,X_d]/\langle P\rangle$ by the
multiplicative set generated by $\{X_1,\ldots,X_d\}$.
The polynomial $P$ is equal to $X_d-\Phi(X_1,\ldots,X_d)$
and therefore
\[\mathbb F_p[X_1,\ldots,X_d]/\langle P\rangle\cong \mathbb F_p[X_1,\ldots,X_{d-1}].\]
Under this isomorphism, $\mathbb F_p[X_1^{\pm 1},\ldots,X_d^{\pm 1}]/\langle P\rangle$
is the localization of 
$\mathbb F_p[X_1,\ldots,X_{d-1}]$ by the multiplicative
set generated by $\{X_1,\ldots,X_{d-1},\Phi(X_1,\ldots,X_{d-1})\}$.
The units in this ring are of the form $U/V$ for polynomials $U,V$
whose prime factor decomposition contains only $X_j$ for $j=1,\ldots,d-1$,
or primes appearing in $\Phi$.
\end{proof}

The automorphism group of a $G$-system necessarily includes
$G$. For certain 
cellular automata, it does not contain much more than that.

\begin{cor}
Suppose that $\Phi(X_1,\ldots,X_{d-1})$ is irreducible.
Then an automorphism of the corresponding linear cellular automaton $P^\perp$ is a multiplication
by $cX_1^{n_1}\ldots X_d^{n_d}$.
\end{cor}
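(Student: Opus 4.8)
The plan is to specialize the preceding corollary to the irreducible case and read off the units explicitly. By Corollary~\ref{automorphism-group-is-finitely-generated}, every automorphism of $P^\perp$ is dual to multiplication by a Laurent polynomial $R$ that represents a unit in
\[
\mathbb F_p[X_1^{\pm 1},\ldots,X_d^{\pm 1}]/\langle P\rangle,
\]
and that proof already identified this ring with the localization $T^{-1}\mathbb F_p[X_1,\ldots,X_{d-1}]$, where $T$ is the multiplicative set generated by $X_1,\ldots,X_{d-1}$ and $\Phi(X_1,\ldots,X_{d-1})$, the isomorphism sending $X_d$ to $\Phi$. So it suffices to determine the group of units of this localization when $\Phi$ is irreducible, and then to transport the answer back.

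First I would recall the standard description of units in a localization of a unique factorization domain. If $A$ is a UFD and $T\subseteq A$ is generated as a multiplicative set by pairwise non-associate primes $\pi_1,\ldots,\pi_r$, then an element of $T^{-1}A$ is a unit exactly when, in lowest terms, both numerator and denominator are products of the $\pi_i$ up to a constant. Equivalently, the unit group of $T^{-1}A$ is generated by $\mathbb F_p^\times$ together with the now-invertible $\pi_i$, so every unit has the form $c\,\pi_1^{a_1}\cdots\pi_r^{a_r}$ with $c\in\mathbb F_p^\times$ and $a_i\in\mathbb Z$.

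Next I would apply this with $A=\mathbb F_p[X_1,\ldots,X_{d-1}]$. Each $X_i$ is prime, and $\Phi$ is prime precisely by the irreducibility hypothesis; moreover $\Phi$ is not a monomial (it has at least two terms), so it is coprime to every $X_i$, and the generators of $T$ are pairwise non-associate primes. Hence the units of the localization are exactly the elements $c\,X_1^{n_1}\cdots X_{d-1}^{n_{d-1}}\Phi^{n_d}$ with $c\in\mathbb F_p^\times$ and $n_1,\ldots,n_d\in\mathbb Z$. Finally I would transport this back along the isomorphism, which replaces $\Phi$ by $X_d$: the unit becomes the monomial $c\,X_1^{n_1}\cdots X_{d-1}^{n_{d-1}}X_d^{n_d}$, and the associated automorphism of $P^\perp$ is multiplication by $cX_1^{n_1}\cdots X_d^{n_d}$, as claimed.

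There is no real obstacle here; the one point demanding care is the unit computation, namely verifying that no irreducible factor other than $X_1,\ldots,X_{d-1}$ and $\Phi$ can occur in a unit. This is exactly where irreducibility of $\Phi$ enters: if $\Phi$ were reducible it would contribute several distinct primes to $T$, enlarging the unit group and yielding only the more general description of Corollary~\ref{automorphism-group-is-finitely-generated}.
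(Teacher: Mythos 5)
Your proposal is correct and follows essentially the same route as the paper: the paper's proof also just specializes the unit description from the preceding corollary (units are quotients of products of the primes $X_1,\ldots,X_{d-1}$ and the primes dividing $\Phi$), notes that irreducibility makes $\Phi$ the only prime it contributes, and uses $\Phi=X_d$ in the quotient ring to rewrite the unit as a monomial. You merely spell out the standard UFD-localization unit computation that the paper leaves implicit.
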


\begin{proof}
This follows from the proof of the previous lemma
combined with the fact that $\Phi=X_d$ in $\mathbb F_p[X_1^{\pm 1},\ldots,X_d^{\pm 1}]/\langle P\rangle$.
\end{proof}

\section{Concluding remarks}

Our topological rigidity result is a weak version of
the measure rigidity result in \cite{Kitchens-Schmidt-2000}. 
The proof in that paper depends on the relatively straightforward
algebraic characterization of $2$-mixing.
Our proof depends on
Masser's algebraic characterization of $k$-mixing, which is a
much deeper result. In our topological setting, we produce a weaker
result using stronger machinery.

The proof of Masser's theorem in~\cite{Masser-2004}
depends on ideas from algebraic geometry.
It has recently been extended in~\cite{Derksen-Masser-2012,Derksen-Masser-2015},
presenting an efficient method to compute minimal non-mixing sets.
Another approach to computing non-mixing sets can be found in~\cite{Bergelson-2018}.

\section{Acknowledgement}

We would like to thank Michael Baake  and Marcus Pivato for useful discussions. 
Reem Yassawi was supported by STAR travel grant~323600
and by the European Research Council (ERC) under the European Union's Horizon 2020 research and innovation programme, Grant Agreement No 648132.  
She also thanks  IRIF, Universit\'e Paris Diderot-Paris 7, for its hospitality and support.

\end{document}